%% file: confinv_312lattice.tex
\documentclass[11pt]{article}
\usepackage{authblk}
\usepackage[toc,page]{appendix}
\usepackage[top=2cm, bottom=2cm, left=2cm, right=2cm]{geometry}

\usepackage{color}
\usepackage{helvet}         
\usepackage{courier}        
\usepackage{type1cm}        

\usepackage{framed} 
\usepackage{tikz}
\usepackage{makeidx}         
\usepackage{graphicx}        
\usepackage{multicol}        
\usepackage[bottom]{footmisc}

\usepackage{amsmath}
\usepackage{amssymb}
\usepackage{bbold}
\usepackage{amsthm}
\usepackage{subcaption}
\usepackage{sidecap}
\usepackage{floatrow}
\usepackage{pdflscape}
\usepackage{comment}
\usepackage[font=small]{caption}
\usepackage{enumitem}
\usepackage{esint}

\usepackage[hidelinks]{hyperref}

\usepackage{scalerel}
\usepackage{shuffle}
\usepackage{mathrsfs}

\sidecaptionvpos{figure}{c} 
\usepackage{sidecap}

\newtheorem{theorem}{Theorem}

\newtheorem{lemma}[theorem]{Lemma}

\newtheorem{proposition}[theorem]{Proposition}

\theoremstyle{definition}
\newtheorem{definition}[theorem]{Definition}
\renewcommand{\arraystretch}{1.5}

\numberwithin{theorem}{section}
\numberwithin{figure}{section}
\numberwithin{table}{section}
\numberwithin{equation}{section}

\begin{document}

\title{Ising model and percolation: from hexagonal lattice to 3-12 lattice}
\bigskip{}
\author[1]{Junyu Mou\thanks{moujy2004@gmail.com.}}
\author[1]{Hao Wu\thanks{hao.wu.proba@gmail.com.}}
\affil[1]{Tsinghua University, China}
\date{}

%
%

\input{tex/macros}
\maketitle
\begin{center}
\begin{minipage}{0.95\textwidth}
\abstract{
In this survey, we extend the conformal invariance of the Ising model and of the percolation from the hexagonal lattice to the 3-12 lattice. 
}
\medbreak
\noindent\textbf{Keywords:} Ising model, percolation, criticality, universality\\ 
\noindent\textbf{MSC:} 82B20
\end{minipage}
\end{center}

\section{Introduction}
\input{tex/intro}
\section{Ising model}
\label{sec::Ising}
\input{tex/Ising}

\section{Percolation}
\label{sec::perco}
\input{tex/perco}
\section{Discussion for other lattices}
\label{sec::otherlattice}
\input{tex/otherlattice}

\appendix
\section{Pure partition functions}
\input{tex/ppf}

\end{document}

%% file: tex/macros.tex
\global\long\def\eps{\epsilon}
\global\long\def\chamber{\mathfrak{X}}
\global\long\def\Y{\mathrm{Y}}

\global\long\def\dist{\mathrm{dist}}
\global\long\def\bs{\boldsymbol}
\global\long\def\SLE{\mathrm{SLE}}
\global\long\def\LP{\mathrm{LP}}
\global\long\def\ee{\mathrm{e}}
\global\long\def\ii{\mathrm{i}}
\global\long\def\312{3\text{-}12}
\global\long\def\Ising{\mathrm{Ising}}
\global\long\def\Perco{\mathrm{perco}}
\global\long\def\PO{\mathrm{H}}
\global\long\def\IO{\mathrm{F}}

\global\long\def\A{\mathbb{A}}
\global\long\def\B{\mathbb{B}}
\global\long\def\C{\mathbb{C}}
\global\long\def\D{\mathbb{D}}
\global\long\def\E{\mathbb{E}}
\global\long\def\F{\mathbb{F}}
\global\long\def\G{\mathbb{G}}
\global\long\def\HH{\mathbb{H}}
\global\long\def\I{\mathbb{I}}
\global\long\def\J{\mathbb{J}}
\global\long\def\K{\mathbb{K}}
\global\long\def\L{\mathbb{L}}
\global\long\def\M{\mathbb{M}}
\global\long\def\N{\mathbb{N}}
\global\long\def\O{\mathbb{O}}
\global\long\def\PP{\mathbb{P}}
\global\long\def\Q{\mathbb{Q}}
\global\long\def\R{\mathbb{R}}
\global\long\def\S{\mathbb{S}}
\global\long\def\T{\mathbb{T}}
\global\long\def\U{\mathbb{U}}
\global\long\def\V{\mathbb{V}}
\global\long\def\W{\mathbb{W}}
\global\long\def\X{\mathbb{X}}
\global\long\def\Z{\mathbb{Z}}
\global\long\def\LA{\mathcal{A}}
\global\long\def\LB{\mathcal{B}}
\global\long\def\LC{\mathcal{C}}
\global\long\def\LD{\mathcal{D}}
\global\long\def\LE{\mathcal{E}}
\global\long\def\LF{\mathcal{F}}
\global\long\def\LG{\mathcal{G}}
\global\long\def\LH{\mathcal{H}}
\global\long\def\LI{\mathcal{I}}
\global\long\def\LJ{\mathcal{J}}
\global\long\def\LK{\mathcal{K}}
\global\long\def\LL{\mathcal{L}}
\global\long\def\LM{\mathcal{M}}
\global\long\def\LN{\mathcal{N}}
\global\long\def\LO{\mathcal{O}}
\global\long\def\LQ{\mathcal{Q}}
\global\long\def\LR{\mathcal{R}}
\global\long\def\LS{\mathcal{S}}
\global\long\def\LT{\mathcal{T}}
\global\long\def\LU{\mathcal{U}}
\global\long\def\LV{\mathcal{V}}
\global\long\def\LW{\mathcal{W}}
\global\long\def\LX{\mathcal{X}}
\global\long\def\LY{\mathcal{Y}}
\global\long\def\LZ{\mathcal{Z}}

%% file: tex/intro.tex
Conformal invariance in critical planar statistical physics models is one of the most fundamental questions in the field. Since 2000, mathematicians have rigorously established conformal invariance for several critical models. In this survey, we focus on the Ising model~\cite{ChelkakSmirnovIsing} and the percolation~\cite{SmirnovPercolationConformalInvariance}. While conformal invariance itself suggests universality—meaning the scaling limit should be independent of the choice of lattice—the existing mathematical proofs rely heavily on the specific lattice structure. In this survey, we extend the known results of conformal invariance from the hexagonal lattice to the 3-12 lattice. 

\paragraph*{Hexagonal lattice and 3-12 lattice.}
The hexagonal lattice is a graph where each vertex has three neighbors, forming regular hexagon as the fundamental unit. 
Fisher transformation acts on each vertex $v$ by replacing it by a triangle. The 3-12 lattice is the Fisher transformation of the hexagonal lattice, as shown in Figure~\ref{fig::Correspondence_6vs312}. 
Fisher transformation was introduced by M. E. Fisher~\cite{Fisher1966} in the study of the relation between Ising and dimer models.
Denote by $\mathbb{L}_6$ the hexagonal lattice and by $\mathbb{L}_{\312}$ the 3-12 lattice.
\begin{SCfigure}[1][!ht]
        \includegraphics[width=0.4\textwidth]{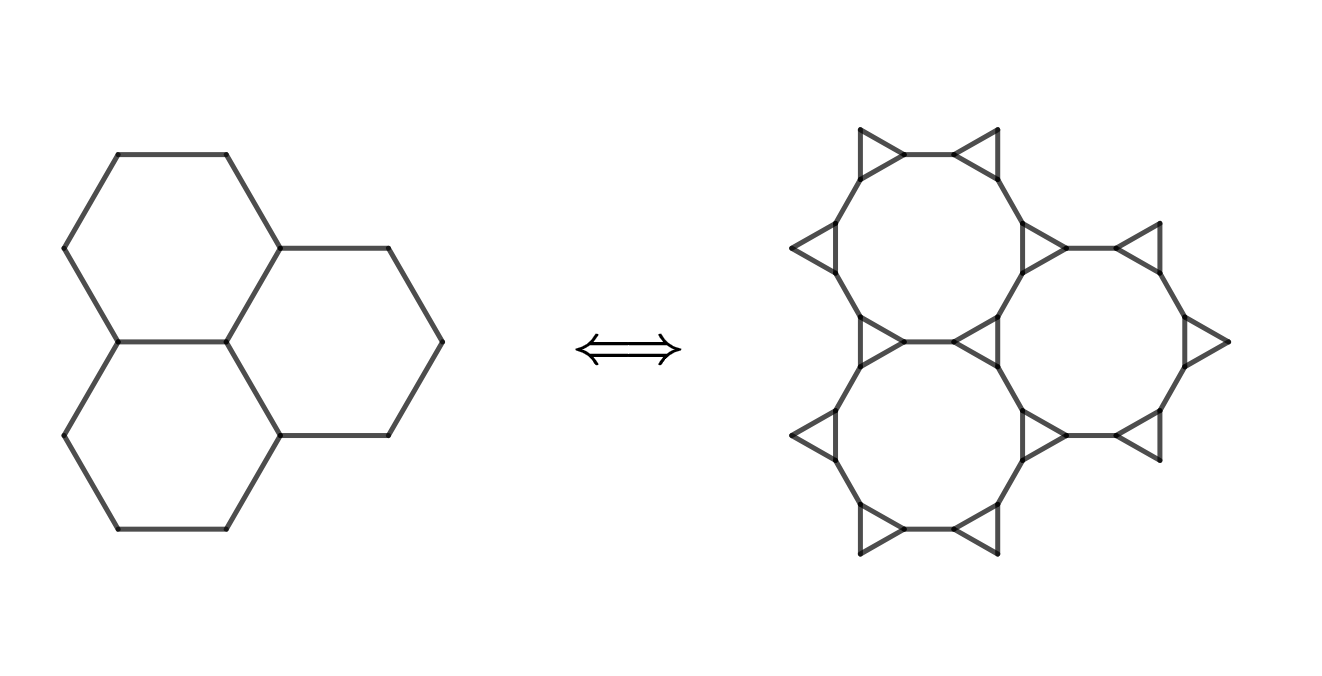}
        \caption{The correspondence between hexagonal lattice (left) and 3-12 lattice (right).}
        \label{fig::Correspondence_6vs312}
\end{SCfigure}

We will consider Ising model and percolation on the hexagonal lattice and on the 3-12 lattice. 
The critical values of these models depend on the lattice, see Table~\ref{table::criticalvalue}; whereas, the critical behavior is the same for both lattices. In these two models, our proof relies on the mapping between vertex configurations observed by M. T. Batchelor in~\cite{Batchelor1998}.
Such mapping was used to derive the connective constant of self-avoiding walk on 3-12 lattice~\cite{JensenGuttmann, Batchelor1998, grimmett2012self} from the connective constant on the hexagonal lattice~\cite{NienhuisOnin2D, DCSmirnovConnectiveConstant}. It turns out that such mapping could also be used to extend the conformal invariance of the Ising model and of the percolation from hexagonal lattice to 3-12 lattice.

\begin{itemize}
\item Ising model. The conformal invariance for the critical Ising model on isoradial graph was proved in~\cite{ChelkakSmirnovIsing}. We extend this conclusion for 3-12 lattice (which is not isoradial). More precisely, we will show that the observable for Ising model introduced in~\cite{ChelkakSmirnovIsing} converge on 3-12 lattice in Theorem~\ref{thm::Ising_observable} and the connection probabilities for the Ising model in polygons converge on 3-12 lattice in Proposition~\ref{prop::Ising_crossproba}. 
\item Percolation. The conformal invariance for the critical site percolation on the hexagonal lattice was proved in~\cite{SmirnovPercolationConformalInvariance}. We extend this conclusion for 3-12 lattice. More precisely, we will show that the observable for site percolation introduced in~\cite{SmirnovPercolationConformalInvariance} converge on 3-12 lattice in Theorem~\ref{thm::Perco_observable} and the connection probabilities for the site percolation in polygons converge on 3-12 lattice in Proposition~\ref{prop::Perco_crossproba}. 
\end{itemize}

\begin{table}[h]
    {\renewcommand{\arraystretch}{1.5} 
    \begin{tabular}{|c|c|c|}
    \hline
    & Hexagonal lattice& 3-12 lattice\\ 
    \hline     
    Ising model & $\beta_6=\frac{1}{4}\log 3$ & $\beta_{\312}=\frac{1}{2}\log \frac{1+\sqrt{4\sqrt{3}-3}}{2}$\\
    \hline
     Site percolation& $p=1/2$ & $p=1/2$\\
    \hline
    \end{tabular}}
    \caption{\label{table::criticalvalue}Critical values on the hexagonal lattice and on the 3-12 lattice. }
\end{table}

\subsection{Ising model} 
\label{subsec::Intro_Ising}

\paragraph*{Ising model.}
Given a finite graph $G=(V(G),E(G))$, a spin assignment on $G$ is a $\sigma=(\sigma^v)_{v\in V(G)}$ with $\sigma^v\in \{-1,1\}$. 
Spin Ising model on $G$ is a random spin assignment such that 
\begin{align}\label{def::Ising}
\PP_G^\beta [\sigma]\propto \exp \left(-\beta H_G(\sigma)\right), \qquad\text{where }H_G(\sigma):=-\sum_{(i,j)\in E(G)}\sigma^i \sigma^j,
\end{align}
where $\beta>0$ is called the inverse temperature and $H_G(\sigma)$ is called the Hamiltonian. 

Given a planar graph $G=(V,E)$, the dual graph $G^*=(V^*,E^*)$ of $G$ is a graph that has a vertex for each face of $G$ and has an edge for each pair of faces in $G$ that are separated by an edge of $G$.
The dual lattice $\mathbb{L}_{\312}^*$ of the 3-12 lattice consists of isosceles triangles with three inner angles $\frac{\pi}{6},\frac{\pi}{6},\frac{2\pi}{3}$ as shown in Figure~\ref{fig::discrete_312}. The vertices with degree $12$ form an equilateral triangular lattice, and vertices with degree $3$ are centers of the equilateral triangles.

\paragraph*{Discrete domains.}
Fix $m\ge 1$, we call $(\Omega; y_1, \ldots, y_m)$ an $m$-polygon if $\Omega$ is a simply connected domain $\Omega\subset \mathbb{C}$, its boundary $\partial\Omega$ is locally connected and the boundary points $y_1,\cdots,y_m$ appearing in counterclockwise order along $\partial\Omega$. We denote by $(y_1y_2)$ the boundary arc from $y_1$ to $y_2$ in counterclockwise orientation.

Denote by $\mathbb{L}_{\312}^\delta$ the $\delta$-scaled 3-12 lattice. A discrete domain $\Omega_{\312}^\delta$ is formed as follows. The boundary is a sequence $\partial \Omega_{\312}^\delta=(u_0,u_1,\ldots,u_{M-1},u_M=u_0)$ consisting of midpoints of edges in $\mathbb{L}_{\312}^\delta$ separating two dodecagons, such that $u_i$ and $u_{i+1}$ belong to a same dodecagon and $\partial\Omega_{\312}^\delta$ forms a loop. The discrete lattice $\Omega_{\312}^\delta$ is the induced subgraph of $\mathbb{L}_{\312}^\delta$ with vertices surrounded by $\partial\Omega_{\312}^\delta$.  The dual lattice $\Omega_{\312}^{\delta,*}$ is the induced subgraph of $\mathbb{L}_{\312}^{\delta,*}$ with vertices at centers of triangles and dodecagons that have at least one vertex in $V(\Omega_{\312}^\delta)$. See in Figure~\ref{fig::discrete_312}.

A discrete polygon $(\Omega_{\312}^{\delta}; y_1^{\delta}, \ldots, y_m^{\delta})$ is a discrete domain $\Omega_{\312}^\delta$ with $m$ boundary points $y_{1}^\delta,y_{2}^\delta,\ldots,y_m^\delta$ in $\partial \Omega_{\312}^\delta$ appearing in counterclockwise order along $\partial \Omega_{\312}^\delta$.
We say discrete polygons $(\Omega_{\312}^{\delta}; y_1^{\delta}, \ldots, y_m^{\delta})$ converge to $(\Omega; y_1, \ldots, y_m)$ as $\delta\to 0$ in the Carath\'{e}odory sense if there exist conformal maps $\phi^{\delta}$ from the unit disc $\U$ onto $\Omega_{\312}^{\delta}$ and a conformal map $\phi$ from $\U$ onto $\Omega$ such that $\phi^{\delta}\to\phi$ uniformly on compact subset of $\U$ as $\delta\to 0$ and $(\phi^{\delta})^{-1}(y_j^{\delta})\to \phi^{-1}(y_j)$ for all $1\le j\le m$.
We say $(\Omega_{\312}^\delta; y_1^\delta,\ldots,y_m^\delta)$ converges to $(\Omega;y_1,\ldots,y_m)$ in the close-Carathéodory sense if it converges in the Carathéodory sense and for each point $z\in \Omega$ and $r>0$ small enough, let $S_r$ be the arc of $\partial B(y_j,r)\cap \Omega$ disconnecting $y_j$ from $z$ and from all other arcs of this set, then when $\delta$ is small enough, $y_j^\delta$ is connected to the midpoint of $S_r$ inside $\Omega_{\312}^{\delta}\cap B(y_j,r)$.

\begin{SCfigure}[1][!ht]
    \centering
    \includegraphics[width=0.4\textwidth]{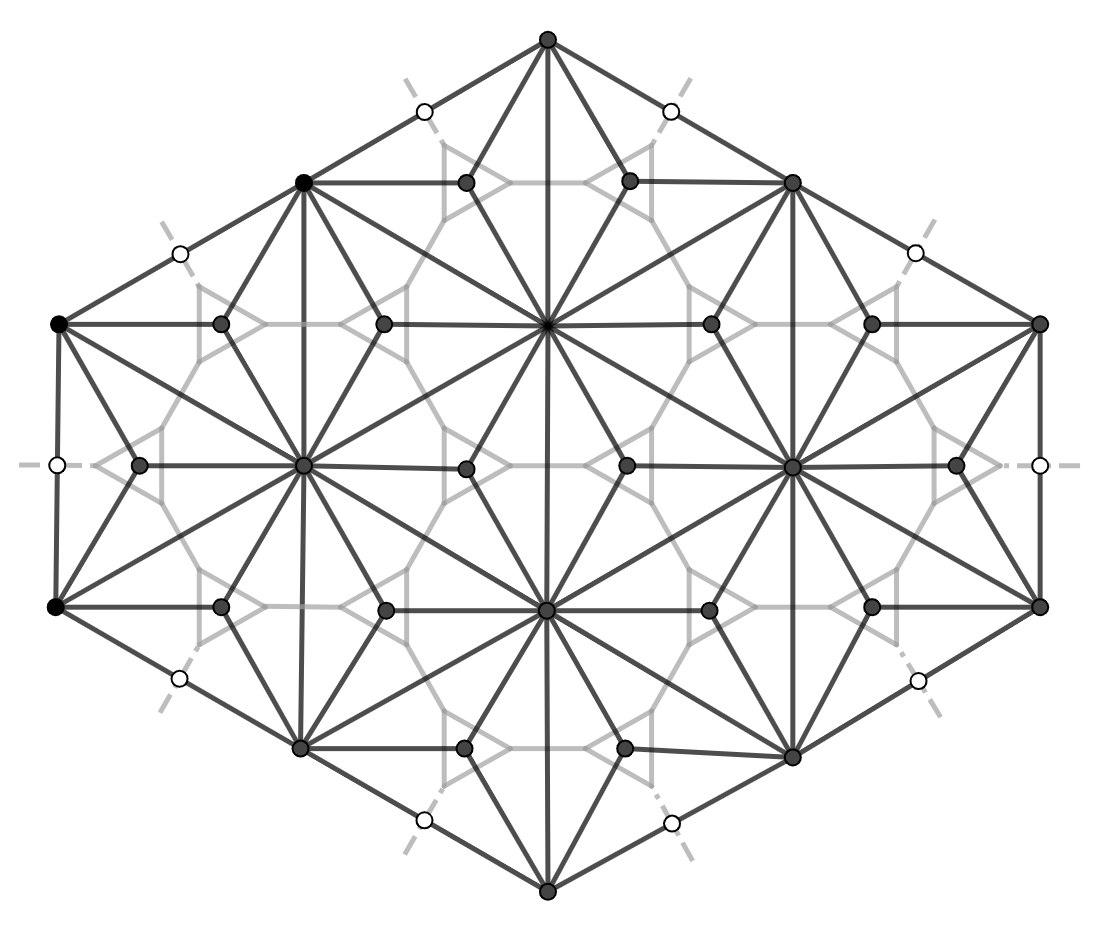}
    \caption{A discrete domain $\Omega_{\312}^\delta$. Grey edges are edges of the primal lattice $\Omega_{\312}^\delta$, black vertices and edges form the dual lattice $\Omega_{\312}^{\delta,*}$, and white points are points in $\partial \Omega_{\312}^\delta$.}
    \label{fig::discrete_312}
\end{SCfigure}

\paragraph*{Chelkak-Smirnov's observable.}
Fix a $2$-polygon $(\Omega; A, B)$. 
Assume that $\Omega$ is flat at $B$: there exists $\eps>0$ such that $[-\eps, \eps]\times (0,\eps]=(-B+\Omega)\cap [-\eps, \eps]^2$. 
Suppose $(\Omega_{\312}^\delta;A^\delta,B^\delta)$ is a family of discrete $2$-polygons such that $(\Omega_{\312}^\delta;A^\delta,B^\delta)$ converges to $(\Omega; A, B)$ in the Carathéodory sense as $\delta\to 0$. We also assume that $\Omega_{\312}^\delta$ is flat near $B^\delta$.

We say a point $z$ is a mid-edge in $\Omega_{\312}^\delta$, if $z$ is the midpoint of some edge in $\Omega_{\312}^\delta$ or $z$ is a point in $\partial \Omega_{\312}^\delta$. Given two mid-edges $z_1,z_2$ in $\Omega_{\312}^\delta$, denote by $\LG_{\312}^\delta(z_1,z_2)$ the collection of subgraphs of $\Omega_{\312}^\delta\cup \{z_1,z_2\}$ that consists of some loops and a self-avoiding path from $z_1$ to $z_2$, such that the loops and the self-avoiding path are disjoint. Given $w\in \LG_{\312}^\delta(z_1,z_2)$, denote by $\gamma(w)$ the self-avoiding path from $z_1$ to $z_2$, and denote by $W_{\gamma(w)}(z_1,z_2)$ the winding number of $\gamma(w)$, which is the total signed rotation in radians of the oriented path.
We consider Ising model with inverse temperature $\beta$ on $(\Omega_{\312}^{\delta,*}; A^{\delta}, B^{\delta})$ with Dobrushin boundary conditions: the spins on $(A^{\delta}B^{\delta})$ are $+1$ and the spins on $(B^{\delta}A^{\delta})$ are $-1$.
We define the fermionic observable to be
\begin{align}\label{eqn::Ising_observable}
    \IO_{\312}^{\delta}(x;z^\diamond)=\dfrac{\sum\limits_{w\in \LG_{\312}^\delta(A^\delta,z^\diamond)}x^{|w|}\exp\left(-\frac{\ii}{2}W_{\gamma(w)}(A^\delta,z^\diamond)\right)}{\sum\limits_{w\in \LG_{\312}^\delta(A^\delta,B^\delta)}x^{|w|}\exp\left(-\frac{\ii}{2}W_{\gamma(w)}(A^\delta,B^\delta)\right)}, \qquad \text{where }x=\ee^{-2\beta}, 
\end{align}
and $z^\diamond$ is a midpoint of an edge in $\Omega_{\312}^\delta$ separating two dodecagons and $|w|$ denotes the number of vertices of $\Omega_{\312}^\delta$ in $w$. 

\begin{theorem}\label{thm::Ising_observable}
Fix a $2$-polygon $(\Omega; A, B)$ and assume that $\Omega$ is flat at $B$. 
Let $\phi$ be a conformal map from $\Omega$ onto the upper-half plane $\HH$ such that $\phi(A)=\infty$ and $\phi(B)=0$. Define $\IO(z)=\sqrt{\phi'(z)/\phi'(B)}$. 
Suppose $(\Omega_{\312}^\delta;A^\delta,B^\delta)$ is a family of discrete $2$-polygons such that $(\Omega_{\312}^\delta;A^\delta,B^\delta)$ converges to $(\Omega; A, B)$ in the Carath\'{e}odory sense as $\delta\to 0$ and also assume that $\Omega_{\312}^\delta$ is flat near $B^\delta$. 
Set
\begin{align}\label{eqn::Ising_criticalbeta_312}
\beta_{\312}=\frac{1}{2}\log \frac{1+\sqrt{4\sqrt{3}-3}}{2}, \qquad x_{\312}=\frac{2}{1+\sqrt{4\sqrt{3}-3}}.
\end{align}
Consider Ising model with inverse temperature $\beta_{\312}$ in $(\Omega_{\312}^\delta;A^\delta,B^\delta)$ with Dobrushin boundary conditions.
Then the observable $\IO_{\312}^{\delta}(x_{\312}; \cdot)$ defined in~\eqref{eqn::Ising_observable} converges to $\IO$ uniformly on any compact subset of $\Omega$ as $\delta\to 0$.
\end{theorem}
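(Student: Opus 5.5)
The plan is to reduce the 3-12 observable to the hexagonal one via Batchelor's local mapping, and then invoke Chelkak--Smirnov. Contracting every triangle of $\Omega_{\312}^\delta$ to a point gives a discrete domain $\Omega_6^\delta$ of the hexagonal lattice (at a fixed scale multiple of $\delta$). Its mid-edges are exactly the mid-edges of $\Omega_{\312}^\delta$ that separate two dodecagons, which are the admissible points $z^\diamond$, $A^\delta$, $B^\delta$. Each loop-and-path configuration $w\in\LG_{\312}^\delta(z_1,z_2)$ projects to a configuration $\pi(w)\in\LG_6(z_1,z_2)$ on $\Omega_6^\delta$. Conversely, given a hexagonal configuration, each hexagonal vertex (triangle) has only finitely many local 3-12 lifts, and these can be chosen independently across triangles:
\begin{itemize}
\item if the vertex is unvisited, the triangle is either empty, or its three vertices form a small triangular loop;
\item if the vertex is visited, with the path entering by one external edge and leaving by another, the triangle is traversed either along one edge (2 vertices) or around the other two edges (3 vertices).
\end{itemize}

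The key step is to sum the weight $x^{|w|}\ee^{-\ii W/2}$ over these lifts. For an unvisited triangle the local weight is $1+x^3$. For a visited triangle it is $x^2+x^3$ times a winding factor. The turning at a hexagonal vertex is $\pm\pi/3$. In the 3-12 lift, both local routes between the same two external edges have the same total rotation, since winding depends only on the entry and exit directions. This holds because the path stays inside the triangle region and cannot wrap around, so it is a Jordan-curve-free local statement, and I would check it explicitly. Hence the winding factor factorises and equals the hexagonal one. Globally,
\begin{align*}
\sum_{w\mapsto w_6} x^{|w|}\ee^{-\frac{\ii}{2}W_{\gamma(w)}}=(1+x^3)^{N}\Bigl(\frac{x^2+x^3}{1+x^3}\Bigr)^{|w_6|}\ee^{-\frac{\ii}{2}W_{\gamma(w_6)}},
\end{align*}
where $N$ is the number of triangles, and $|w_6|$ counts hexagonal vertices visited by $w_6$, whether on loops or on the path. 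The factor $(1+x^3)^N$ cancels in the ratio~\eqref{eqn::Ising_observable}, so $\IO_{\312}^\delta(x;z^\diamond)=\IO_6^\delta(y;z^\diamond)$ with $y=x^2(1+x)/(1+x^3)=x^2/(1-x+x^2)$. One subtlety is that boundary mid-edges and flat-boundary conventions must match, so that $A^\delta,B^\delta$ correspond.

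Next I would verify that $x=x_{\312}$ gives $y=x_6=1/\sqrt3=\ee^{-2\beta_6}$. This amounts to solving $\sqrt3\,x^2=1-x+x^2$, i.e. $(\sqrt3-1)x^2+x-1=0$. Its positive root is $x=\frac{-1+\sqrt{1+4(\sqrt3-1)}}{2(\sqrt3-1)}=\frac{-1+\sqrt{4\sqrt3-3}}{2(\sqrt3-1)}$. Rationalising shows this equals $\frac{2}{1+\sqrt{4\sqrt3-3}}$, using $(\sqrt{4\sqrt3-3})^2-1=4(\sqrt3-1)$.

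Finally, $(\Omega_6^\delta;A^\delta,B^\delta)$ converges to $(\Omega;A,B)$ in the Carathéodory sense, since it differs from $\Omega_{\312}^\delta$ by $O(\delta)$, and it remains flat near $B^\delta$. The hexagonal lattice is isoradial and $x_6$ is its critical weight, so the Chelkak--Smirnov theorem~\cite{ChelkakSmirnovIsing} gives $\IO_6^\delta(x_6;\cdot)\to\IO$ uniformly on compacts. The normalisation by the $B^\delta$ sum matches their $\sqrt{\phi'/\phi'(B)}$ normalisation. The lattice rescaling between the two graphs does not affect the limit.

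The main obstacle is the bijection and weight bookkeeping. One must confirm that the loop-and-path model on 3-12 has no configurations outside these local lifts, for example a path revisiting a triangle; this is impossible since each triangle vertex has degree 3. One must also confirm that the winding phase is exactly preserved, including at the endpoints $A^\delta$ and $z^\diamond$. The rest is citation.
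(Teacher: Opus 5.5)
Your proposal is correct and follows the paper's proof. Lemma~\ref{lem::corr_Ising_observable} contracts each triangle to a hexagonal vertex and sums over the same local lifts: weight $1+x^3$ at unvisited vertices and $x^2+x^3$ at visited ones, with the winding unchanged. This gives $\IO_{\312}^\delta(x;z^\diamond)=\IO_6^\delta\bigl(\frac{x^3+x^2}{x^3+1};z^\diamond\bigr)$; the paper then checks that $x_{\312}$ maps to $x_6=1/\sqrt{3}$ and cites Chelkak--Smirnov. Your explicit checks of the winding through a triangle, of why a triangle cannot be visited twice, and of the Carath\'eodory convergence and flatness of $\Omega_6^\delta$ are details the paper leaves implicit.
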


\paragraph*{Multiple interfaces of Ising model.} 
Given a discrete $2N$-polygon $(\Omega_{\312}^\delta;y_1^\delta,\ldots,y_{2N}^\delta)$, we consider alternating boundary conditions: 
the boundary vertices of $\partial\Omega_{\312}^{\delta,*}$ on the arc $(y_{2j-1}^{\delta} y_{2j}^{\delta})$ have spin $+1$ for $j\in \{1, \ldots, N\}$; 
the boundary vertices of $\partial\Omega_{\312}^{\delta,*}$ on the arc $(y_{2j}^{\delta} y_{2j+1}^{\delta})$ have spin $-1$ for $j\in \{0, \ldots, N-1\}$. 
Here we write $y_0=y_{2N}$. 
Given a spin assignment $\sigma$ with alternating boundary conditions, for any $1\le j\le N$, there exists a unique self-avoiding path $\gamma_{\312}^{\delta,j}$ on $\Omega_{\312}^\delta\cup \{y_1^\delta,\ldots,y_{2N}^\delta\}$ starting from $y_{2j-1}^\delta$ and ending at some point $y_{2k_j}^\delta$ in $\{y_{2}^{\delta}, y_4^{\delta}, \ldots, y_{2N}^{\delta}\}$, such that for each oriented edge $e$ of $\gamma_{\312}^{\delta,j}$, the center of the face on the right-hand side of $e$ has spin $+1$ and the center of the face on the left-hand side of $e$ has spin $-1$. Note that $\gamma_{\312}^{\delta,1},\ldots,\gamma_{\312}^{\delta,N}$ are disjoint, see Figure~\ref{fig::interface_312}. 
We call $\LA^{\delta}_{\312}=\{\{1,2k_1\},\{3,2k_2\},\ldots,\{2N-1,2k_N\}\}$ a link pattern, which gives a pair partition of $\{1,\ldots,2N\}$. We denote by $\mathrm{LP}_N$ the set of all possible link patterns. 

\begin{SCfigure}[1][!ht]
    \includegraphics[width=0.4\textwidth]{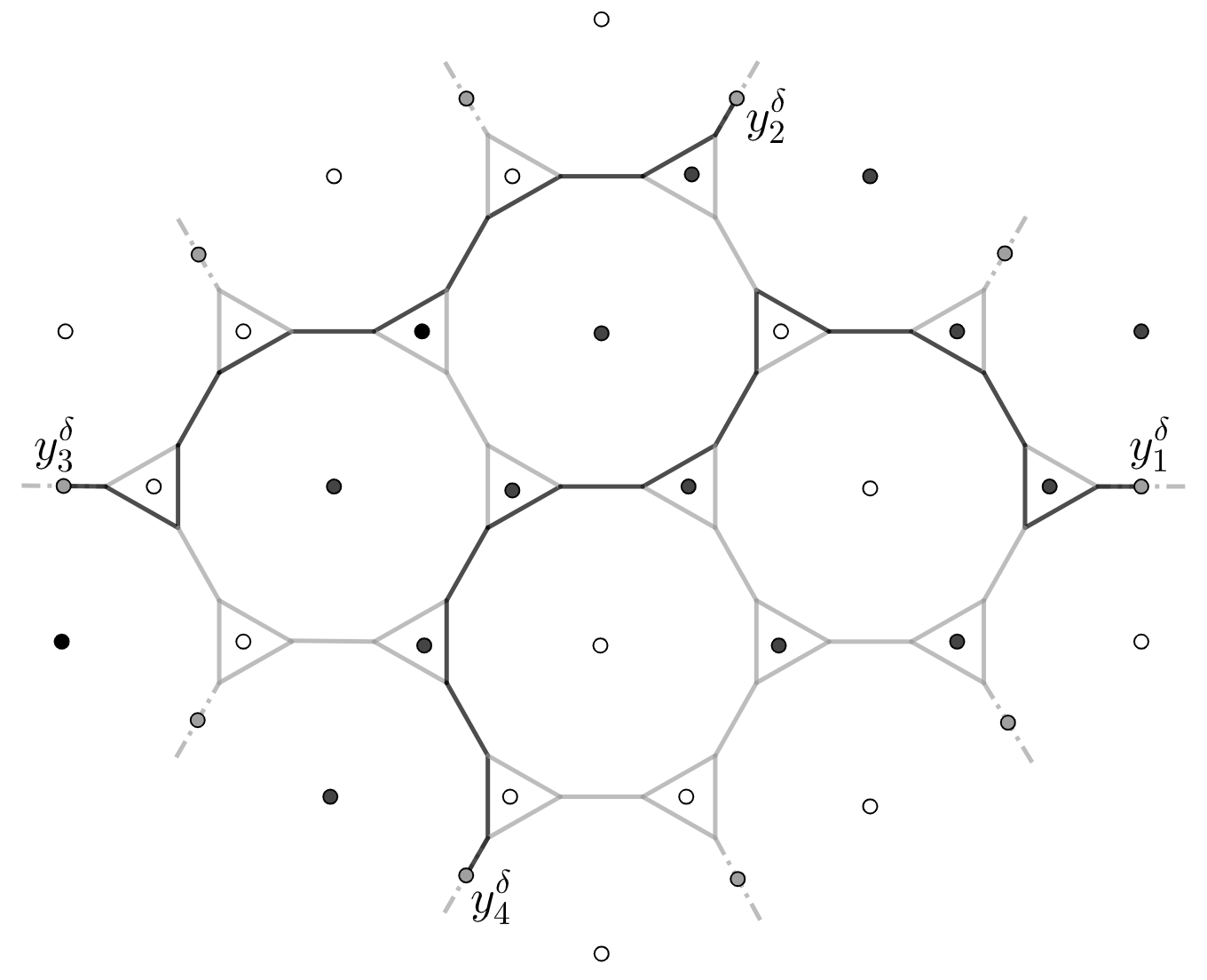}
    \caption{A spin assignment with alternating boundary condition for $N=2$. Vertices with spin $+1$ are colored black and vertices with spin $-1$ are colored white. Black edges form the multiple interface for the spin assignment, and the corresponding link pattern is $\LA^{\delta}_{\312}=\{\{1,4\},\{2,3\}\}$.}
    \label{fig::interface_312}
\end{SCfigure}

\begin{proposition}\label{prop::Ising_crossproba}
Fix $N\ge 1$ and $2N$-polygon $(\Omega;y_1,\ldots,y_{2N})$. 
Suppose $(\Omega_{\312}^\delta;y_1^\delta,\ldots, y_{2N}^\delta)$ is a family of discrete polygons such that $(\Omega_{\312}^\delta;y_1^\delta,\ldots,y_{2N}^\delta)$ converges to $(\Omega;y_1,\ldots,y_{2N})$ in the close-Carathéodory sense as $\delta\to 0$. 
Let $\LA^\delta_{\312}$ be the link pattern for Ising model with inverse temperature $\beta_{\312}$ (defined in~\eqref{eqn::Ising_criticalbeta_312}) on $(\Omega_{\312}^{\delta, *}; y_1^\delta,\ldots, y_{2N}^\delta)$ with alternating boundary conditions. Then the scaling limits for probabilities for the link pattern are conformally invariant:
\begin{align}\label{eqn::Ising_crossproba}
\lim_{\delta\to 0}\PP_{\Ising}\left[\LA_{\312}^{\delta}=\alpha\right]=\frac{\LZ_{\alpha}^{(3)}(\Omega; y_1, \ldots, y_{2N})}{\LZ_{\mathrm{Ising}}(\Omega; y_1, \ldots, y_{2N})}, \qquad\text{for all }\alpha\in\LP_N, 
\end{align}
where $\{\LZ_{\alpha}^{(3)}: \alpha\in\LP_N\}$ are the pure partition functions for multiple $\SLE_3$ given in Definition~\ref{def::pure_partition} and $\LZ_{\mathrm{Ising}}=\sum_{\alpha\in\LP_N}\LZ_{\alpha}^{(3)}$. 
\end{proposition}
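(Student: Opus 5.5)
We follow the strategy that gives the analogous statement on the square lattice and on isoradial graphs: combine tightness of the interfaces, identification of subsequential limits as multiple $\SLE_3$ via the convergence of the Chelkak--Smirnov observable, and the characterization of pure partition functions in Definition~\ref{def::pure_partition}.

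\textbf{Step 1: tightness and boundary estimates.} We first show that the collection $(\gamma_{\312}^{\delta,1},\ldots,\gamma_{\312}^{\delta,N})$ is tight in the space of curves. This needs the uniform annulus-crossing (RSW-type) bounds for the critical Ising model on $\Omega_{\312}^{\delta,*}$ with arbitrary boundary conditions. We will obtain them via Batchelor's vertex mapping, which relates the high-temperature expansion on $\mathbb{L}_{\312}$ at $x_{\312}$ to the one on $\mathbb{L}_6$ at $x_6=1/\sqrt3$. Each triangle's local configurations are summed so that $x_{\312}$ reproduces, up to a global factor, the hexagonal weights. This transfers the needed estimates from the hexagonal/isoradial setting.

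The close-Carathéodory hypothesis is used at this stage. It guarantees that no interface comes close to a marked point $y_j$ without terminating there, so subsequential limits are simple curves connecting the $y_j$'s. This is the standard Kemppainen--Smirnov framework.

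\textbf{Step 2: identification of the marginal law of each curve.} Fix a subsequence along which the curves and the link pattern converge. Condition on $\LA^\delta_{\312}=\alpha$ and on all curves except $\gamma^{\delta,j}$. In the complementary component, the remaining curve is a Dobrushin interface, and Theorem~\ref{thm::Ising_observable} (applied with flatness arranged by a local modification, or through the standard martingale-observable argument) shows that it converges to chordal $\SLE_3$. The observable $\IO_{\312}^\delta$, stopped along the growing interface, is a discrete martingale, and its limit $\sqrt{\phi'}$ identifies the driving function. Consequently, any subsequential limit of the conditional law given $\{\LA=\alpha\}$ is a global $N$-$\SLE_3$ associated with $\alpha$. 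By uniqueness of global multiple $\SLE_\kappa$ for $\kappa\le4$ (Beffara--Peltola--Wu), this law is determined.

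\textbf{Step 3: computing the limiting probabilities.} Let $p_\alpha$ denote a subsequential limit of $\PP[\LA^\delta_{\312}=\alpha]$. Growing $\gamma^{1}$ from $y_1$, the unconditional law of its initial segment is a mixture $\sum_\alpha p_\alpha\,\mathbb{Q}_\alpha$. Here $\mathbb{Q}_\alpha$ is the Loewner evolution with drift $3\,\partial_1\log\LZ^{(3)}_\alpha$, so the mixture has drift $3\,\partial_1\log\sum_\alpha c_\alpha\LZ^{(3)}_\alpha$ with $c_\alpha\propto p_\alpha/\LZ^{(3)}_\alpha$.

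Independently, the marginal of $\gamma^1$ can be identified directly by the same martingale argument with alternating boundary conditions. For this we use the multi-point Ising fermionic observable, a Pfaffian of two-point observables each converging by Theorem~\ref{thm::Ising_observable}-type arguments, transported to $\mathbb{L}_{\312}$ by Batchelor's mapping. This shows the drift is $3\,\partial_1\log\LZ_{\Ising}$.

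Comparing the two drifts, and using the linear independence of $\{\LZ^{(3)}_\alpha\}$ together with the asymptotics of Definition~\ref{def::pure_partition}, forces $c_\alpha$ to be constant, which gives \eqref{eqn::Ising_crossproba}. Since the limit does not depend on the subsequence, the full limit exists.

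\textbf{Main obstacle.} The hardest step is the uniform transfer of Step 1 and of the multi-point observable convergence from the hexagonal lattice to $\mathbb{L}_{\312}$. Batchelor's mapping matches partition-function weights, but the observable involves winding phases $\ee^{-\ii W/2}$ inside each triangle. We must check that summing these phases over the internal configurations of a triangle yields exactly the hexagonal-lattice local weight. This local identity is what makes $x_{\312}$ critical and presumably underlies Theorem~\ref{thm::Ising_observable}, so it can be reused. The remaining difficulty is boundary behaviour near the $y_j$, where we will lean on the close-Carathéodory assumption.
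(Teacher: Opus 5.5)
Your outline re-runs the Peltola--Wu argument from scratch on $\mathbb{L}_{\312}$. As written it has a real gap at Step~1, and the inputs to Steps~2--3 are also unsupported. The identity you take from Batchelor's mapping is an identity between \emph{weighted sums} of loop configurations: after summing the two internal states of each triangle, $x_{\312}$-weights on $\mathbb{L}_{\312}$ become $x_6$-weights on $\mathbb{L}_6$ up to a global factor. That gives equality of partition functions and of the fermionic observable (this is Lemma~\ref{lem::corr_Ising_observable}). It does not by itself give annulus-crossing bounds for Ising or FK-Ising with arbitrary boundary conditions, which is what the Kemppainen--Smirnov condition needs. On the hexagonal side those bounds come from FK-Ising RSW and isoradial discrete complex analysis, and neither is available on the non-isoradial $\mathbb{L}_{\312}$. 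Similarly, Theorem~\ref{thm::Ising_observable} concerns the Dobrushin observable in a Carath\'eodory-convergent family of domains flat at $B$. It does not give the uniform convergence in rough slit domains, nor the multi-point (Izyurov-type) observable convergence with alternating boundary conditions, that Steps~2--3 use. So ``transferred via Batchelor's mapping'' does all the work without justification.

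The missing idea is that the correspondence holds at the level of the spin measures, not just loop weights. Once you have that, Steps~2--3 are unnecessary. Sum out the spin at each degree-$3$ vertex of $\Omega_{\312}^{\delta,*}$, with its three dodecagon neighbours fixed. This gives the factor $\mathrm{e}^{9\beta/2}+\mathrm{e}^{-3\beta/2}$ if those three spins agree and $\mathrm{e}^{\beta/2}+\mathrm{e}^{-3\beta/2}$ otherwise, with ratio $\mathrm{e}^{4\beta}-\mathrm{e}^{2\beta}+1$. Hence the restriction $\mathscr{G}(\Sigma_{\312})$ to the dodecagon centres is exactly an Ising model on $\Omega_6^{\delta,*}$, with the same alternating boundary conditions and $\mathrm{e}^{2\xi}=\mathrm{e}^{4\beta}-\mathrm{e}^{2\beta}+1$. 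At $\beta=\beta_{\312}$ this equals $\sqrt3=\mathrm{e}^{2\beta_6}$, the critical value (Lemma~\ref{lem::couplingIsing}). Moreover, an interface of $\sigma_{\312}$ traverses edges separating two dodecagons, with a $+$ dodecagon on its right and a $-$ one on its left. So its image under $\mathscr{F}$ is the interface of $\mathscr{G}(\sigma_{\312})$ with the same endpoints (Lemma~\ref{lem::corr_Isinginterface}). Thus $\mathcal{A}_{\312}^\delta=\mathcal{A}_6^\delta$ pointwise under the coupling, and $\mathbb{P}[\mathcal{A}_{\312}^\delta=\alpha]=\mathbb{P}[\mathcal{A}_6^\delta=\alpha]$ for every $\delta$. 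The domains share their boundary points, so the limit follows from the known hexagonal-lattice result of Chelkak--Smirnov and Peltola--Wu. No tightness, observable or SLE argument on $\mathbb{L}_{\312}$ is needed. Your ``main obstacle'' is also a non-issue. Both internal routes through a triangle between two given inter-triangle edges have the same net turning $\pm\pi/3$, so the winding phase is unchanged, which is exactly why Lemma~\ref{lem::corr_Ising_observable} works.
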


\subsection{Percolation}
\label{subsec::intro_perco}
\paragraph*{Site percolation.} 
Given a finite graph $G=(V(G),E(G))$, the site percolation with probability $p\in [0,1]$ is a random spin assignment $\sigma$ on $G$ such that $(\sigma^v)_{v\in V(G)}$ are i.i.d. with \[\PP_{\Perco}[\sigma^v=1]=p,\quad \PP_{\Perco}[\sigma^v=-1]=1-p.\]
Equivalently, the site percolation measure $\PP_{\Perco}$ on the space of spin assignments is given by 
\[\PP_{\Perco}[\sigma]=p^{\# \{v\in V(G):\sigma^v=1\}}(1-p)^{\# \{v\in V(G):\sigma^v=-1\}}.\]

\paragraph*{Smirnov's observable.}
Fix a $3$-polygon $(\Omega; A, B, C)$. Suppose $(\Omega_{\312}^{\delta}; A^{\delta}, B^{\delta}, C^{\delta})$ is a family of discrete polygons such that $(\Omega_{\312}^{\delta}; A^{\delta}, B^{\delta}, C^{\delta})$ converges to $(\Omega; A, B, C)$ in the Carathéodory sense as $\delta\to 0$.
We consider site percolation with $p=1/2$. For a vertex $z\in V(\Omega_{\312}^{\delta})$, define $\LE_{\312}^{\delta}(A; z)$ to be the event that there exists a path $\gamma$ on $\Omega_{\312}^{\delta,*}$ connecting boundary arcs $(A^\delta B^\delta)$ and $(C^\delta A^\delta)$, such that all vertices on $\gamma$ have spin $+1$ and $\gamma$ separates $z$ from the boundary arc $(B^\delta C^\delta)$. 
We denote by $\PO_{\312}^{\delta}(A; z)$ the probability of the event $\LE_{\312}^{\delta}(A; z)$.
We define the events $\LE_{\312}^\delta(B;z),\LE_{\312}^\delta(C;z)$ and the probabilities $\PO_{\312}^\delta(B;z),\PO_{\312}^\delta(C;z)$ in a similar way.
We define the observable
\begin{align}\label{eqn::perco_observable}
\PO_{\312}^{\delta}(z):=\PO_{\312}^{\delta}(A; z)+\tau \PO_{\312}^{\delta}(B; z)+\tau^2 \PO_{\312}^{\delta}(C; z), \qquad \text{where }\tau=\ee^{2\pi\ii/3}. 
\end{align}

\begin{theorem}\label{thm::Perco_observable}
Fix a bounded $3$-polygon $(\Omega; A, B, C)$. Suppose $\PO$ is the unique conformal map from $\Omega$ onto the equilateral triangle $\rhd$ with three vertices $(1, \tau, \tau^2)$ such that $\PO(A)=1, \PO(B)=\tau, \PO(C)=\tau^2$. 
Suppose $(\Omega_{\312}^{\delta}; A^{\delta}, B^{\delta}, C^{\delta})$ is a family of discrete polygons such that $(\Omega_{\312}^{\delta}; A^{\delta}, B^{\delta}, C^{\delta})$ converges to $(\Omega; A, B, C)$ in the Carathéodory sense as $\delta\to 0$. 
Consider site percolation with $p=1/2$ in $\Omega_{\312}^{\delta, *}$. 
Then the observable $\PO_{\312}^{\delta}$ defined in~\eqref{eqn::perco_observable} converges to $\PO$ uniformly as $\delta\to 0$. 
\end{theorem}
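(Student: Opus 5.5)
We reduce Theorem~\ref{thm::Perco_observable} to Smirnov's theorem on the triangular lattice (equivalently, site percolation on the faces of the hexagonal lattice) through Batchelor's vertex-configuration mapping. The point is that site percolation with $p=1/2$ on $\Omega_{\312}^{\delta,*}$ lives on the faces of the 3-12 lattice: the dodecagons and the triangles. Each triangle of $\mathbb{L}_{\312}$ replaces a vertex $v$ of $\mathbb{L}_6$. The dodecagons are in bijection with the hexagons of $\mathbb{L}_6$, so the dodecagon centers form a triangular lattice.

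\textbf{Step 1: discard the triangle spins by merging.} A triangle face touches exactly three dodecagons. Consider the three dodecagon centers adjacent to a triangle center $t$. In $\Omega_{\312}^{\delta,*}$, every pair of them is adjacent through $t$. In the triangular lattice of dodecagon centers, every pair is also directly adjacent, since those dodecagons share an edge separating two dodecagons. Hence the spin at $t$ never creates or destroys connectivity between dodecagons: a $+$ path through $t$ can be shortcut by the direct edge. The only role of a triangle spin is in the boundary and separation statements, through paths that end at or pass near $z$.

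It follows that the event $\LE_{\312}^\delta(A;z)$ is measurable, up to boundary effects, with respect to the dodecagon spins. These spins are i.i.d.\ Bernoulli$(1/2)$ on a $\delta'$-scaled triangular lattice, with $\delta'=c\delta$. For $z$ a vertex of $\Omega_{\312}^\delta$, a separating path of dodecagons plus triangles can be replaced by the dodecagon-only shortcut path. This shortcut still separates $z$ unless $z$ lies on the triangle that was shortcut. In that exceptional case $z$ is within $O(\delta)$ of an open arm, and we relate it to the event for the nearby dodecagon center.

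\textbf{Step 2: compare observables and invoke Smirnov.} Let $\tilde z$ be the center of a dodecagon adjacent to $z$. We bound
\[|\PO_{\312}^\delta(A;z)-\PO_{6}^{\delta'}(A;\tilde z)|\le \PP[\text{a three-arm type event from the }O(\delta)\text{-neighbourhood of } z \text{ to distance } r]+o(1).\]
This probability tends to zero by the RSW estimates for critical site percolation on the triangular lattice. The same bound handles the discrepancy between the discrete boundaries, where $\Omega_{\312}^\delta$ seen as a triangular-lattice domain converges in the Carathéodory sense to the same $\Omega$. Smirnov's theorem~\cite{SmirnovPercolationConformalInvariance} gives $\PO_6^{\delta'}\to\PO$ locally uniformly. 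Uniform convergence on all of $\Omega$, rather than only on compacts, then follows from equicontinuity up to the boundary, which in turn follows from RSW, together with the boundary values $\PO(A)=1$ etc.\ being matched by the discrete one-arm estimates near the corners.

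\textbf{Main obstacle.} The delicate part is the boundary. We must (i) choose $A^\delta,B^\delta,C^\delta$ and the arcs consistently for the induced triangular-lattice domain, since boundary triangles may be adjacent to only one or two dodecagons in the domain, and (ii) obtain the uniform rather than compact convergence. For (i), we first try adopting the convention that boundary arcs are defined by the dodecagons. We then show that a triangle next to the boundary changes the crossing events only on an event bounded by a half-plane two-arm probability, which is $o(1)$ uniformly by RSW.
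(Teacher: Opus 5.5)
Your proposal is correct and follows essentially the paper's route: restrict to the i.i.d.\ dodecagon-center spins, which form critical site percolation on $\Omega_6^{\delta,*}$; note that shortcutting a triangle center only moves $z$ across the path when the path turns around $z$ inside its triangle; control that discrepancy by RSW; and invoke Smirnov. The paper packages this more cleanly by using the triangle center $\tilde z=\mathscr{F}(z)$ instead of a dodecagon center, which yields $\LE_{6}^{\delta}(A;\tilde z)\subseteq\LE_{\312}^{\delta}(A;z)\subseteq\LE_{6}^{\delta}(A;\tilde z)\cup\LE_{6}^{\delta}(A;z')$ with $z'$ the reflection of $\tilde z$ across the relevant edge, so the error is a H\"older increment $C\delta^{\eps}$ of the hexagonal observable and uniform convergence is imported directly from Smirnov and Beffara; your boundary concern (i) does not arise, because $\partial\Omega_{\312}^{\delta}$ only cuts edges between two dodecagons, so every triangle in the domain keeps all three of its neighbouring dodecagons.
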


\paragraph*{Convergence of crossing probabilities.}
We use the same notion of the discretization of a polygon $(\Omega;y_1,\ldots,y_{2N})$ in Section~\ref{subsec::Intro_Ising}. 
We also use the same construction of the multiple interfaces and the link pattern of a spin assignment on the dual lattice with alternating boundary conditions in Section~\ref{subsec::Intro_Ising}. Analogous conclusion for percolation also holds.

\begin{proposition}\label{prop::Perco_crossproba}
Fix $N\ge 1$ and $2N$-polygon $(\Omega;y_1,\ldots,y_{2N})$. 
Suppose $(\Omega_{\312}^\delta;y_1^\delta,\ldots, y_{2N}^\delta)$ is a family of discrete polygons such that $(\Omega_{\312}^\delta;y_1^\delta,\ldots,y_{2N}^\delta)$ converges to $(\Omega;y_1,\ldots,y_{2N})$ in the close-Carathéodory sense as $\delta\to 0$. 
Let $\LA^\delta_{\312}$ be the link pattern for percolation with $p=1/2$ in $(\Omega_{\312}^{\delta, *}; y_1^\delta,\ldots, y_{2N}^\delta)$ with alternating boundary conditions. Then the scaling limits for probabilities for the link pattern are conformally invariant:
\begin{align}\label{eqn::perco_crossproba}
\lim_{\delta\to 0}\PP_{\Perco}\left[\LA_{\312}^{\delta}=\alpha\right]=\LZ_{\alpha}^{(6)}(\Omega; y_1, \ldots, y_{2N}), \qquad\text{for all }\alpha\in\LP_N, 
\end{align}
where $\{\LZ_{\alpha}^{(6)}: \alpha\in\LP_N\}$ are the pure partition functions for multiple $\SLE_6$ given in Definition~\ref{def::pure_partition}. 
\end{proposition}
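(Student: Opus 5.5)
The plan is to deduce Proposition~\ref{prop::Perco_crossproba} from the hexagonal-lattice result through Batchelor's vertex-configuration mapping, in the same way as the Ising case. The key point is that site percolation at $p=1/2$ on the dual lattice $\Omega_{\312}^{\delta,*}$ is exactly site percolation at $p=1/2$ on the triangular lattice formed by the degree-$12$ vertices (the centers of dodecagons). The degree-$3$ vertices sit at centers of triangles, and each such center is adjacent only to the three dodecagon centers around it. Those three dodecagon centers are already pairwise adjacent in the triangular lattice.

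\textbf{Step 1 (the triangle vertices are irrelevant).} Condition on the spins of the dodecagon centers. An interface $\gamma_{\312}^{\delta,j}$ travels along edges of $\Omega_{\312}^\delta$ with $+1$ on the right and $-1$ on the left. When it enters a small triangle of the 3-12 lattice, it leaves through one of the other two edges. Which edge it takes depends on the spin of the triangle's center, but in both cases it exits between the same pair of dodecagons that a hexagonal-lattice interface would use. So collapsing each small triangle to a point (the inverse Fisher transformation) turns $\gamma_{\312}^{\delta,j}$ into the percolation interface on the hexagonal lattice $\Omega_6^\delta$ for the same dodecagon-center spins. This is exactly the content of Batchelor's mapping: a triangle is traversed or avoided according to whether the corresponding hexagonal vertex is visited. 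The endpoints $y^\delta_{2k_j}$ are unchanged, so the link pattern $\LA^\delta_{\312}$ equals the hexagonal link pattern $\LA^\delta_6$ of the induced configuration, for every value of the triangle-center spins. Since the spins are i.i.d., the dodecagon-center marginal is critical site percolation on the triangular lattice. Hence $\PP_{\Perco}[\LA^\delta_{\312}=\alpha]=\PP_{\Perco}[\LA^\delta_6=\alpha]$ on the collapsed discrete polygon.

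\textbf{Step 2 (boundary and domain matching).} Collapsing each small triangle gives a hexagonal discrete domain $\Omega_6^\delta$ at scale comparable to $\delta$. It has the same boundary midpoints $u_i$, and the marked points $y_j^\delta$ keep their positions. The two domains differ only by $O(\delta)$, so close-Carathéodory convergence of $(\Omega_{\312}^\delta;y_1^\delta,\ldots,y_{2N}^\delta)$ transfers to $(\Omega_6^\delta;\cdot)$. One also checks that the alternating boundary conditions on $\partial\Omega_{\312}^{\delta,*}$ correspond to those on the triangular lattice. The boundary dual vertices are dodecagon centers, together with triangle centers whose spins do not affect the interfaces by Step~1.

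\textbf{Step 3 (input from the hexagonal lattice).} For critical site percolation on the hexagonal lattice under close-Carathéodory convergence, the link-pattern probabilities converge to $\LZ^{(6)}_\alpha(\Omega;y_1,\ldots,y_{2N})$. This follows from Smirnov's Cardy formula (equivalently Theorem~\ref{thm::Perco_observable}, transported the same way), the convergence of interfaces to $\SLE_6$, and the identification of multiple-$\SLE_6$ pure partition functions as connection probabilities. No normalization appears because $\sum_\alpha\LZ^{(6)}_\alpha=1$, which is the locality of $\SLE_6$. I expect the main obstacle to be Step~1 near the boundary. There one must verify that triangles cut by $\partial\Omega_{\312}^\delta$ and triangles containing a marked point $y_j^\delta$ still behave as described, so that the interface cannot terminate or branch differently. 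I would handle this by checking the finitely many local boundary configurations allowed by the definition of the boundary midpoints $u_i$, which all lie on edges between two dodecagons.
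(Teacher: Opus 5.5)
Your proposal follows the paper's proof. The paper likewise notes that the restriction $\mathscr{G}(\Sigma_{\312})$ to the dodecagon centers is critical site percolation on $\Omega_6^{\delta,*}$, uses the triangle-collapsing map $\mathscr{F}$ (Lemma~\ref{lem::corr_Isinginterface}) to get $\LA_{\312}^\delta=\LA_6^\delta$ deterministically, and then appeals to the hexagonal-lattice result, which it also only sketches by analogy with the FK-Ising case. Your boundary concerns in Steps~1--2 are moot: the paper sets $\partial\Omega_6^\delta=\partial\Omega_{\312}^\delta$, so the two polygons literally coincide and no $O(\delta)$ comparison is needed, and every small triangle lies entirely inside or entirely outside the domain.
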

When $N=2$, the convergence~\eqref{eqn::perco_crossproba} is Cardy-Smirnov formula~\cite{CardyCriticalPerco,SmirnovPercolationConformalInvariance}. 

\paragraph*{Outline and strategy.}
We prove Theorem~\ref{thm::Ising_observable} and Proposition~\ref{prop::Ising_crossproba} in Section~\ref{sec::Ising}. 
We present a coupling between Ising models on $\L_6$ and $\L_{\312}$ in Section~\ref{subsec::couplingIsing}, and then complete the proof for the Ising model using the coupling and the known results for $\L_6$ from~\cite{ChelkakSmirnovIsing} and~\cite{PeltolaWuCrossingProbaIsing}.  
We prove Theorem~\ref{thm::Perco_observable} and Proposition~\ref{prop::Perco_crossproba} in Section~\ref{sec::perco}. 
We control the difference between the observable on the two lattices and complete the proof for the percolation using the known results for $\L_6$ from~\cite{SmirnovPercolationConformalInvariance, beffara2007cardy}. 
We also discuss the conformal invariance on the martini lattice in Section~\ref{sec::otherlattice}. 

\paragraph*{Acknowledgment.}
We thank M. T. Batchelor for telling us~\cite{Batchelor1998}. 
We thank G. R. Grimmett for telling us~\cite{grimmett2012self}.
H.W. is supported by New Cornerstone Investigator Program 100001127. H.W. is partly affiliated at Yanqi Lake Beijing Institute of Mathematical Sciences and Applications, Beijing, China.

%% file: tex/Ising.tex

\subsection{Coupling between Ising models}
\label{subsec::couplingIsing}

\paragraph*{Mapping on vertex set.}
Recall that $\mathbb{L}_{\312}=(V(\mathbb{L}_{\312}), E(\mathbb{L}_{\312}))$ denotes the 3-12 lattice. Define an equivalent relation $\sim$ on the vertices $V(\mathbb{L}_{\312})$: $u\sim v$ if $u$ and $v$ belong to the same triangular face. Take the quotient space $\mathbb{L}_{\312}/\sim$, then each equivalence class (3 vertices from one triangle) becomes a single vertex; the edges of $\mathbb{L}_{\312}$ that originally connect vertices within the same triangle are discarded; the edges connecting vertices from different triangles become edges in the quotient, forming a hexagonal lattice. This gives a three-to-one mapping $\mathscr{F}:V(\mathbb{L}_{\312})\to V(\mathbb{L}_6)$ between vertex sets. 

\paragraph*{Discrete polygons.}
Given a discrete polygon $(\Omega_{\312}^\delta; y_1^\delta,\ldots,y_{2N}^\delta)$ on the 3-12 lattice, we construct the corresponding discrete polygon $(\Omega_6^\delta,y_1^\delta,\ldots,y_{2N}^\delta)$ on the hexagonal lattice as follows. 
Let $\Omega_6^\delta$ be the induced subgraph of $\mathbb{L}_6^\delta$ with vertex set $V(\Omega_6^\delta)=\mathscr{F}(V(\Omega_{\312}^\delta))$ and let $\Omega_6^{\delta,*}$ be the induced subgraph of $\Omega_{\312}^{\delta,*}$ that consists of all vertices of equilateral triangles in $\Omega_{\312}^{\delta,*}$. Boundary points $\partial \Omega_6^\delta=\partial \Omega_{\312}^\delta$ are midpoints of edges in $\mathbb{L}_6^*$ that have exactly one endpoint in $V(\Omega_6^\delta)$. In particular boundary points $x_1^\delta,\ldots,x_{2N}^\delta$ for $\Omega_{\312}^{\delta}$ are boundary points for $\Omega_6^\delta$. See Figure~\ref{fig::discrete_6} for a discrete polygon on the hexagonal lattice.

\medbreak
In this section, we will give a coupling between the Ising model on $\Omega_{\312}^{\delta, *}$ and the Ising model on $\Omega_6^{\delta, *}$.
Given a spin assignment $\sigma_{\312}$ on $\Omega_{\312}^{\delta,*}$, we denote the restriction of $\sigma_{\312}$ on $V(\Omega_6^{\delta,*})$ by 
\begin{align}\label{eqn::sigma312_sigma6}
\mathscr{G}(\sigma_{\312}):=\left(\sigma_{\312}^v: v\in V(\Omega_6^{\delta, *})\right). 
\end{align}
This is a spin assignment on $\Omega_6^{\delta,*}$. Note that $\mathscr{G}(\sigma_{\312})$ and $\sigma_{\312}$ have the same boundary conditions because $\partial \Omega_{\312}^{\delta,*}=\partial \Omega_6^{\delta,*}$.

\begin{figure}[h]
    \includegraphics[height=0.3\textwidth]{figures/discrete_312.png}
    \includegraphics[height=0.3\textwidth]{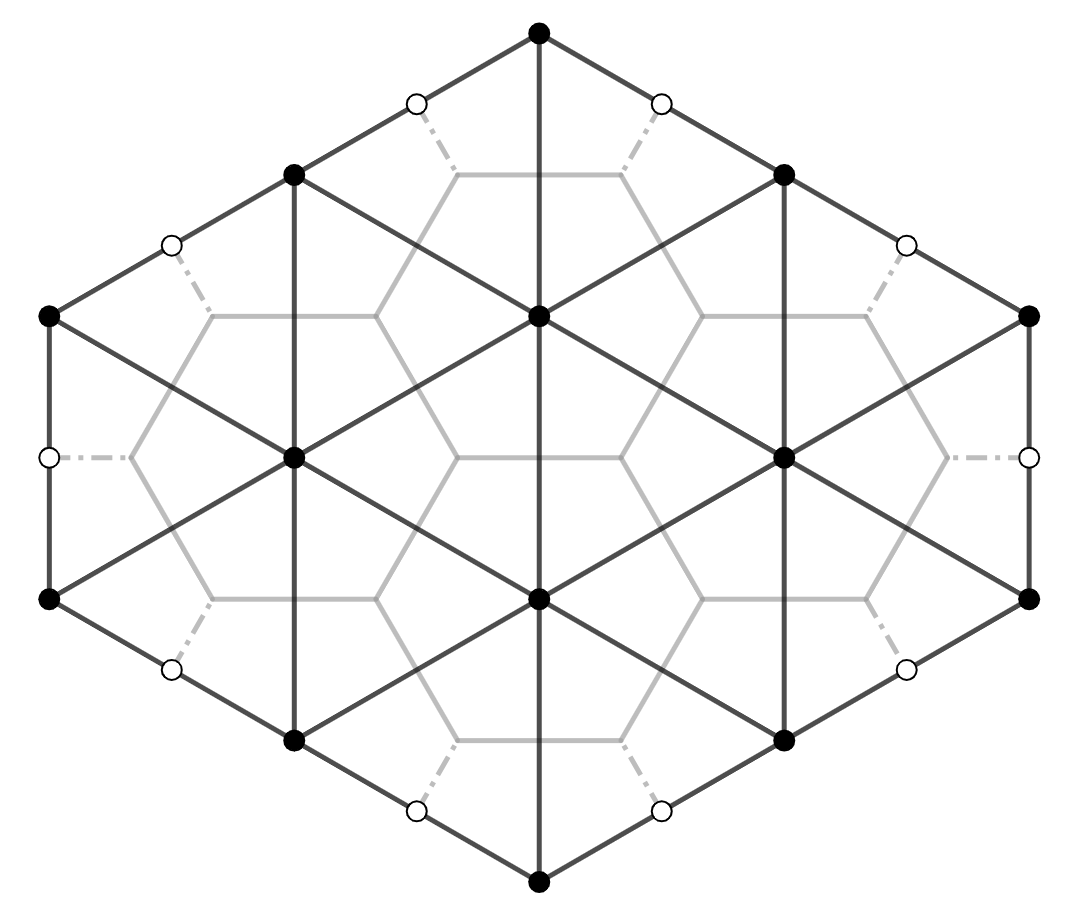}
    \caption{
In the left panel, this is the polyon $\Omega_{\312}^\delta$ in Figure~\ref{fig::discrete_312}. 
In the right panel, this is the corresponding discrete domain $\Omega_6^\delta$ on the hexagonal lattice. Grey edges are edges of the primal lattice $\Omega_6^\delta$, black vertices and edges form $\Omega_6^{\delta,*}$, and white points are points in $\partial \Omega_6^\delta$.} 
    \label{fig::discrete_6}
\end{figure}

\begin{lemma}\label{lem::couplingIsing}
Suppose $\Sigma_{\312}$ has the law of Ising model with inverse temperature $\beta$ on the discrete polygon $(\Omega_{\312}^{\delta, *}; x_1^{\delta}, \ldots, x_{2N}^{\delta})$ with alternating boundary conditions. Then $\mathscr{G}(\Sigma_{\312})$ has the law of Ising model with inverse temperature  
\begin{align}\label{eqn::beta_xi}
\xi=\frac{1}{2}\log (\ee^{4\beta}-\ee^{2\beta}+1)
\end{align}
on the discrete polygon $(\Omega_6^{\delta, *}; y_1^{\delta}, \ldots, y_{2N}^{\delta})$ with alternating boundary conditions.
\end{lemma}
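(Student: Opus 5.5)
The plan is to integrate out the spins at the degree-3 vertices of $\Omega_{\312}^{\delta,*}$, i.e.\ at the centers of the equilateral triangles of $\mathbb{L}_{\312}^\delta$. This is a decimation (star–triangle type) computation. Recall the dual 3-12 lattice: degree-12 vertices (centers of dodecagons) form a triangular lattice, and each degree-3 vertex $c$ (center of a triangle of $\mathbb{L}_{\312}$) is adjacent to exactly the three dodecagon-centers $a,b,d$ surrounding that triangle. Two dodecagon-centers are adjacent in $\Omega_{\312}^{\delta,*}$ exactly when their dodecagons share an edge; this is the edge dual to an edge of $\mathbb{L}_{\312}$ between different triangles, hence dual to an edge of $\mathbb{L}_6$. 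So $\Omega_6^{\delta,*}$ is the triangular lattice on dodecagon-centers, and its edges are precisely the degree-12 to degree-12 edges of $\Omega_{\312}^{\delta,*}$.

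The first step is to check that each triangle-center $c$ is an interior vertex, i.e.\ not a boundary vertex with fixed spin. This holds because the boundary points of $\partial\Omega_{\312}^\delta$ are midpoints of dodecagon-dodecagon edges, so all boundary dual vertices are dodecagon-centers. I also need that every triangle-center in $\Omega_{\312}^{\delta,*}$ has all three neighbors present in $V(\Omega_6^{\delta,*})$; this follows from the construction, since the triangle has a vertex in $V(\Omega_{\312}^\delta)$ and then so do the three dodecagons containing it.

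With this, I will split $H$ into the sum over triangular-lattice edges plus, for each $c$, the three terms $\sigma^c(\sigma^a+\sigma^b+\sigma^d)$. Conditionally on the dodecagon spins, the spins $\sigma^c$ are independent, and summing over $\sigma^c=\pm1$ gives the factor
\[
f(\sigma^a,\sigma^b,\sigma^d)=2\cosh\bigl(\beta(\sigma^a+\sigma^b+\sigma^d)\bigr).
\]
The key step is to write $f=K\exp\bigl(\eta(\sigma^a\sigma^b+\sigma^b\sigma^d+\sigma^d\sigma^a)\bigr)$. Since $f$ depends only on $|s|$ with $s=\sigma^a+\sigma^b+\sigma^d\in\{\pm3,\pm1\}$, there are only two cases to match. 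For $|s|=3$ the sum of pairwise products is $3$, and for $|s|=1$ it is $-1$. Hence
\[
\ee^{4\eta}=\frac{\cosh 3\beta}{\cosh\beta}=4\cosh^2\beta-3=\ee^{2\beta}-1+\ee^{-2\beta}.
\]

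Next, each edge of the triangular lattice $\Omega_6^{\delta,*}$ borders exactly two triangles of $\Omega_{\312}^{\delta,*}$ whose apex is a degree-3 vertex, one on each side. In $\mathbb{L}_{\312}$ these correspond to the two triangles at the ends of the dual hexagonal edge. So each edge receives the original coupling $\beta$ plus $2\eta$, and the effective coupling is
\[
\xi=\beta+2\eta=\beta+\tfrac12\log\bigl(\ee^{2\beta}-1+\ee^{-2\beta}\bigr)=\tfrac12\log\bigl(\ee^{4\beta}-\ee^{2\beta}+1\bigr),
\]
as claimed. The product of the constants $K$ does not depend on the configuration, so it disappears on normalization. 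The boundary spins are unchanged, so the alternating boundary conditions transfer directly.

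The main obstacle is the combinatorial bookkeeping near the boundary. I must confirm that each edge of $\Omega_6^{\delta,*}$ is adjacent to exactly two degree-3 vertices lying in $\Omega_{\312}^{\delta,*}$. I also must handle edges between two fixed boundary spins, which only contribute constants, and check the convention for whether edges between boundary dual vertices belong to the graph. I expect to settle this from the definition of $\Omega_{\312}^{\delta,*}$ as an induced subgraph: if an edge joins two dodecagons both in $\Omega_{\312}^{\delta,*}$, then the two triangles at its endpoints contain vertices of $\Omega_{\312}^\delta$ or are only incident to boundary spins. In the latter case their contribution is again a function of boundary data only, so it is absorbed into the normalization.
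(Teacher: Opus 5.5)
Your proposal is correct and is essentially the paper's argument. The paper also sums out each degree-3 spin separately and distinguishes the same two cases (the three neighbouring spins all equal or not); it differs only in bookkeeping, assigning half of each triangular-lattice edge's coupling to each of the two adjacent faces plus a deterministic boundary correction $H_\partial$, so that the face ratio $(\ee^{9\beta/2}+\ee^{-3\beta/2})/(\ee^{\beta/2}+\ee^{-3\beta/2})$ equals $\ee^{2\xi}$ directly, whereas you extract the star--triangle coupling $\eta$ and set $\xi=\beta+2\eta$. One small fix: a single vertex of a triangle lies in only two of the three adjacent dodecagons, so to show that all three neighbours of a triangle-centre lie in $\Omega_6^{\delta,*}$ you need that each triangle of the 3-12 lattice is entirely inside or entirely outside the boundary loop, which holds because the loop only crosses dodecagon--dodecagon edges.
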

\begin{proof}
For a spin configuration $\sigma_{\312}$ on $(\Omega_{\312}^{\delta, *}; y_1^{\delta}, \ldots, y_{2N}^{\delta})$ with alternating boundary conditions, denote by $H_{\312}^{\delta}(\sigma_{\312})=-\sum_{(i,j)\in E(\Omega_{\312}^{\delta, *})}\sigma_{\312}^i\sigma_{\312}^{j}$ the Hamiltonian of $\sigma_{\312}$ given in~\eqref{def::Ising}. Then
\begin{align}\label{eqn::Ising312_proba}
\PP\left[\Sigma_{\312}=\sigma_{\312}\right]=\frac{1}{Z^{\delta}_{\312; \beta}}\exp\left(-\beta H_{\312}^{\delta}(\sigma_{\312})\right), \qquad \text{where }Z^{\delta}_{\312; \beta}=\sum_\tau \exp\left(-\beta H_{\312}^{\delta}(\tau)\right),
\end{align}
and the summation is over all spin configurations $\tau$ on $(\Omega_{\312}^{\delta, *}; y_1^{\delta}, \ldots, y_{2N}^{\delta})$ with alternating boundary conditions.
For a spin configuration $\sigma_{6}$ on $(\Omega_{6}^{\delta, *}; y_1^{\delta}, \ldots, y_{2N}^{\delta})$ with alternating boundary conditions, denote by $H_{6}^{\delta}(\sigma_{6})=-\sum_{(i,j)\in E(\Omega_{6}^{\delta, *})}\sigma_{6}^i\sigma_{6}^{j}$ the Hamiltonian of $\sigma_{6}$ given in~\eqref{def::Ising}. Our goal is to show that 
\begin{align}\label{eqn::Ising312_induced6}
\begin{split}
&\PP[\mathscr{G}(\Sigma_{\312})=\sigma_6]=\frac{1}{Z_{6;\xi}^{\delta}}\exp\left(-\xi H_6^{\delta}(\sigma_6)\right), \qquad\text{where }
Z^{\delta}_{6; \xi}=\sum_\tau \exp\left(-\xi H_{6}^{\delta}(\tau)\right),
\end{split}
\end{align}
and the summation is over all spin configurations $\tau$ on $(\Omega_{6}^{\delta, *}; y_1^{\delta}, \ldots, y_{2N}^{\delta})$ with alternating boundary conditions. From~\eqref{eqn::Ising312_proba}, we have 
\begin{align*}
\PP[\mathscr{G}(\Sigma_{\312})=\sigma_6]=\frac{1}{Z^{\delta}_{\312; \beta}}\sum_{\substack{\sigma_{\312}: \\\mathscr{G}(\sigma_{\312})=\sigma_6}}\exp\left(-\beta H_{\312}^{\delta}(\sigma_{\312})\right), 
\end{align*}
where the summation is over all spin assignments $\sigma_{\312}$ on $\Omega_{\312}^{\delta, *}$ such that $\mathscr{G}(\sigma_{\312})=\sigma_6$. Comparing with~\eqref{eqn::Ising312_induced6}, it suffices to show 
\begin{align}\label{eqn::Ising312_induced6_aux1}
\sum_{\substack{\sigma_{\312}: \\\mathscr{G}(\sigma_{\312})=\sigma_6}}\exp\left(-\beta H_{\312}^{\delta}(\sigma_{\312})\right)=\frac{Z^{\delta}_{\312;\beta}}{Z^{\delta}_{6;\xi}}\exp\left(-\xi H_6^{\delta}(\sigma_6)\right). 
\end{align}

\begin{SCfigure}[1.5][!ht]
    \includegraphics[width=0.3\textwidth]{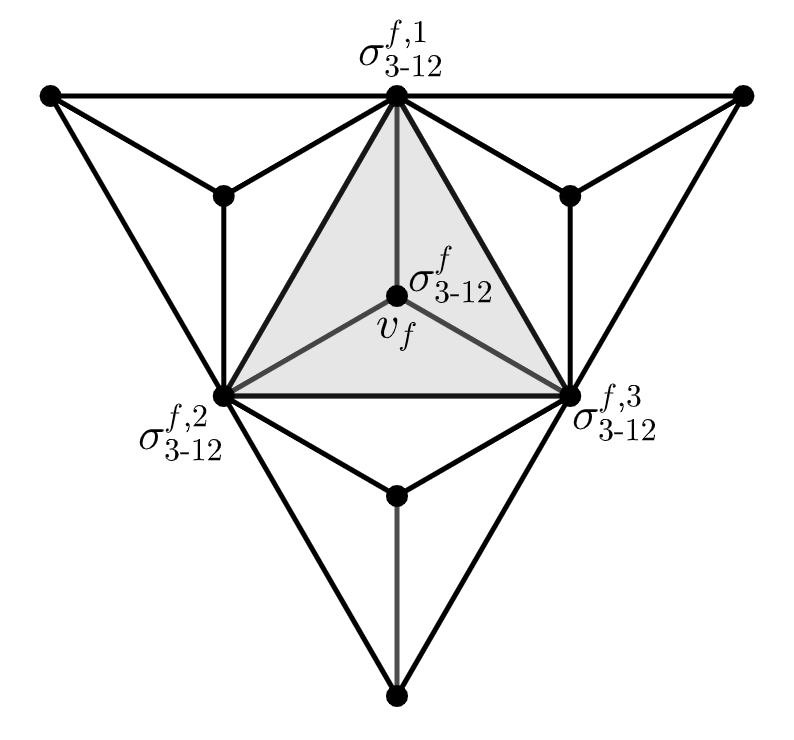}
    \caption{The triangle colored grey is a face $f$ in $\Omega_6^{\delta,*}$ with the center $v_f$. Denote by $\sigma_{\312}^f$ the spin on $v_f$, and by $\sigma_{\312}^{f,1},\sigma_{\312}^{f,2},\sigma_{\312}^{f,3}$ the spins on three neighboring vertices of $v_f$.}
    \label{fig::local_spin}
\end{SCfigure}

Denote by $F(\Omega_6^{\delta,*})$ the set of faces in $\Omega_6^{\delta,*}$. For a face $f\in F(\Omega_6^{\delta,*})$, let $v_f$ be the center of $f$. 
In fact, $v_f$ is a vertex of $\Omega_{\312}^{\delta,*}$ with degree three. For a spin configuration $\sigma_{\312}$ in $\Omega_{\312}^{\delta, *}$, 
we denote its spin at $v_f$ by $\sigma_{\312}^f$ and denote its spins at the three neighboring vertices by $\sigma_{\312}^{f,1}, \sigma_{\312}^{f,2}, \sigma_{\312}^{f,3}$, see Figure~\ref{fig::local_spin}. We define its local energy for $f$ by 
\[H_{\312; f}(\sigma_{\312}):=-(\sigma_{\312}^{f,1}+\sigma_{\312}^{f,2}+\sigma_{\312}^{f,3})\sigma_{\312}^f -\frac{1}{2}(\sigma_{\312}^{f,1}\sigma_{\312}^{f,2}+\sigma_{\312}^{f,1}\sigma_{\312}^{f,3}+\sigma_{\312}^{f,2}\sigma_{\312}^{f,3}).\]
One may check that 
\begin{align}\label{eqn::Ising_aux2}
    H_{\312}^\delta(\sigma_{\312})=\sum_{f\in F(\Omega_6^{\delta,*})}H_{\312;f}(\sigma_{\312})-\underbrace{\frac{1}{2}\sum_{(u,v)\in \partial \Omega_{\312}^{\delta,*}}\sigma_{\312}^u\sigma_{\312}^v}_{H_{\partial}:=},
\end{align}
where the second sum is taken for all edges on the boundary of $\Omega_{\312}^{\delta,*}$. Since the boundary condition is fixed, the quantity $H_{\partial}$ is deterministic.

Similarly, for the spin assignment $\sigma_6$ on $\Omega_6^{\delta,*}$ with spins $\sigma_6^{f,1},\sigma_6^{f,2},\sigma_6^{f,3}$ on the three vertices of a face $f\in F(\Omega_6^{\delta,*})$, define its local energy for $f$ by
\[H_{6;f}(\sigma_6)=-\frac{1}{2}(\sigma_6^{f,1}\sigma_6^{f,2}+\sigma_6^{f,1}\sigma_6^{f,3}+\sigma_6^{f,2}\sigma_6^{f,3}),\]
then the Hamiltonian for $\sigma_6$ on $\Omega_6^{\delta,*}$ is 
\begin{align}\label{eqn::Ising_aux3}
 H_6^\delta(\sigma_6)=\sum_{f\in F(\Omega_6^{\delta,*})}H_{6;f}(\sigma_6)-H_{\partial}.
 \end{align}

For a fixed $\sigma_6$ and a fixed face $f\in F(\Omega_6^{\delta, *})$, let us consider $H_{\312;f}(\sigma_{\312})$ for $\sigma_{\312}$ such that $\mathscr{G}(\sigma_{\312})=\sigma_6$. As the restriction of $\sigma_{\312}$ to $V(\Omega_6^{\delta, *})$ is the same as $\sigma_6$, we have 
\[\sigma_{\312}^{f, 1}=\sigma_6^{f, 1}, \qquad \sigma_{\312}^{f, 2}=\sigma_6^{f, 2}, \qquad \sigma_{\312}^{f, 3}=\sigma_6^{f, 3}, \qquad \sigma_{\312}^f\in \{\pm 1\}.\]
Combining with~\eqref{eqn::Ising_aux2}, we have
\begin{align}\label{eqn::Ising_aux4}
\text{LHS of~\eqref{eqn::Ising312_induced6_aux1}}
    =&\ee^{\beta H_\partial}\sum_{\substack{\sigma_{\312}: \\\mathscr{G}(\sigma_{\312})=\sigma_6}}\exp\left(-\beta \sum_{f\in F(\Omega_6^{\delta,*})}H_{\312;f}(\sigma_{\312})\right)\notag\\
    =&   \ee^{\beta H_\partial}\prod_{f\in F(\Omega_6^{\delta, *})}\sum_{\substack{\sigma_{\312}^f\in\{\pm 1\}\\ \sigma_{\312}^{f, j}=\sigma_6^{f,j}} }\exp\left(-\beta H_{\312; f}(\sigma_{\312})\right).
\end{align}
There are two cases for $\sigma_6^{f,1}, \sigma_6^{f,2}, \sigma_6^{f,3}$.
\begin{itemize}
\item If $\sigma_6^{f,1}, \sigma_6^{f,2}, \sigma_6^{f,3}$ have the same sign, then 
\begin{align*}
H_{6;f}(\sigma_6)=-\frac{3}{2}, \qquad \sum_{\substack{\sigma_{\312}^f\in\{\pm 1\}\\ \sigma_{\312}^{f, j}=\sigma_6^{f,j}} }\exp\left(-\beta H_{\312; f}(\sigma_{\312})\right)=\ee^{\frac{9}{2}\beta}+\ee^{-\frac{3}{2}\beta}.
\end{align*}
\item If $\sigma_6^{f,1}, \sigma_6^{f,2}, \sigma_6^{f,3}$ don't have the same sign, then
\begin{align*}
H_{6;f}(\sigma_6)=\frac{1}{2}, \qquad \sum_{\substack{\sigma_{\312}^f\in\{\pm 1\}\\ \sigma_{\312}^{f, j}=\sigma_6^{f,j}} }\exp\left(-\beta H_{\312; f}(\sigma_{\312})\right)=\ee^{\frac{1}{2}\beta}+\ee^{-\frac{3}{2}\beta}.
\end{align*}
\end{itemize}
Denote by $F^{(1)}(\sigma_6)$ the set of faces of the first case, and denote by $F^{(2)}(\sigma_6)$ the set of faces of the second case. 
Note that $\#F^{(1)}(\sigma_6)+\#F^{(2)}(\sigma_6)=\# F(\Omega_6^{\delta, *})$ which does not depend on $\sigma_6$. 
Plugging the calculation with the two cases into~\eqref{eqn::Ising_aux4}, we have
\begin{align*}
\text{LHS of~\eqref{eqn::Ising312_induced6_aux1}}=&\ee^{\beta H_{\partial}}\times \left(\ee^{\frac{9}{2}\beta}+\ee^{-\frac{3}{2}\beta}\right)^{\#F^{(1)}(\sigma_6)}\times \left(\ee^{\frac{1}{2}\beta}+\ee^{-\frac{3}{2}\beta}\right)^{\# F^{(2)}(\sigma_6)}=C_{\beta}\left(\frac{\ee^{\frac{9}{2}\beta}+\ee^{-\frac{3}{2}\beta}}{\ee^{\frac{1}{2}\beta}+\ee^{-\frac{3}{2}\beta}}\right)^{\#F^{(1)}(\sigma_6)},
\end{align*}
where $C_{\beta}$ is a constant depending on $\beta$ and the polygon $(\Omega_6^{\delta,*}; y_1^{\delta}, \ldots, y_{2N}^{\delta})$. 
Plugging the calculation with the two cases into~\eqref{eqn::Ising_aux3}, we have
\begin{align*}
\exp(-\xi H_6^{\delta}(\sigma_6))=\ee^{\xi H_{\partial}}\times \left(\ee^{\frac{3}{2}\xi}\right)^{\#F^{(1)}(\sigma_6)}\times \left(\ee^{-\frac{1}{2}\xi}\right)^{\# F^{(2)}(\sigma_6)}=C_{\xi}\left(\ee^{2\xi}\right)^{\#F^{(1)}(\sigma_6)}, 
\end{align*}
where $C_{\xi}$ is a constant depending on $\xi$ and the polygon $(\Omega_6^{\delta,*}; y_1^{\delta}, \ldots, y_{2N}^{\delta})$. When $\xi$ is chosen as in~\eqref{eqn::beta_xi}, we have 
\begin{align*}
\text{LHS of~\eqref{eqn::Ising312_induced6_aux1}}=\frac{C_{\beta}}{C_{\xi}}\exp(-\xi H_6^{\delta}(\sigma_6)). 
\end{align*}
Summing over all possible $\sigma_6$ in both sides, we have 
\[Z_{\312;\beta}^{\delta}=\frac{C_{\beta}}{C_{\xi}}Z_{6;\xi}^{\delta}.\]
Therefore,
\[\text{LHS of~\eqref{eqn::Ising312_induced6_aux1}}=\frac{Z_{\312;\beta}^{\delta}}{Z_{6;\xi}^{\delta}}\exp(-\xi H_6^{\delta}(\sigma_6)),\]
as desired. 
\end{proof}

\subsection{Convergence of crossing probabilities: proof of Proposition~\ref{prop::Ising_crossproba}}
\label{subsec::Ising_crossproba}

\paragraph*{Mapping on paths.}
Suppose $z_1,z_2$ are two mid-edges of $\Omega_{\312}^\delta$ separating two dodecagons and suppose $\gamma$ is a path from $z_1$ to $z_2$ on $\Omega_{\312}^\delta$. Let $z^{(0)},\cdots,z^{(\ell)}$ be mid-edges in $\Omega_{\312}^\delta$ separating two dodecagons that are visited by $\gamma$ consecutively. 
Using such notations, we have $z^{(0)}=z_1$ and $z^{(\ell)}=z_2$.
Note that $z^{(0)},\cdots,z^{(\ell)}$ are mid-edges in $\Omega_6^\delta$ (midpoints for some edge in $\Omega_6^\delta$ or boundary points in $\partial \Omega_6^\delta$). We define $\mathscr{F}(\gamma)$ to be the path on $\Omega_{6}^\delta\cup \{z_1,z_2\}$, that starts at $z_1$ and ends at $z_2$, such that all mid-edges in $\Omega_6^{\delta}$ visited by $\mathscr{F}(\gamma)$ are $z^{(0)},\cdots,z^{(\ell)}$ consecutively, see in Figure~\ref{fig::corr_SAW}.
If we assume further that $\gamma$ is a self-avoiding path on $\Omega_{\312}^{\delta}$, we claim that $\mathscr{F}(\gamma)$ is also a self-avoiding path on $\Omega_6^{\delta}$. 
In this case, the mid-edges $z^{(0)},\cdots,z^{(\ell)}$ are distinct.
For each $0\le j\le \ell-1$, let $v^{(j)}$ be the vertex of $\Omega_6^\delta$ lies between $z^{(j)}$ and $z^{(j+1)}$ on $\mathscr{F}(\gamma)$. It suffices to prove that $v^{(0)},\cdots,v^{(\ell-1)}$ are distinct. 
We prove this by contradiction. Let $\gamma^{(j)}$ be the restriction of $\gamma$ from $z^{(j)}$ to $z^{(j+1)}$, then $\gamma^{(j)}$ visits at least two points in $\mathscr{F}^{-1}(v^{(j)})$.
Suppose there is a vertex $v\in V(\Omega_6^\delta)$ such that $v^{(j_1)}= v^{(j_2)}=v$ for some $j_1\neq j_2$. Since 
$\gamma^{(j_1)}$ and $\gamma^{(j_2)}$ are disjoint, there must be  $\# \mathscr{F}^{-1}(v)\ge 4$, which contradicts with $\# \mathscr{F}^{-1}(v)=3$. 

\begin{figure}[!ht]
    \includegraphics[width=0.4\textwidth]{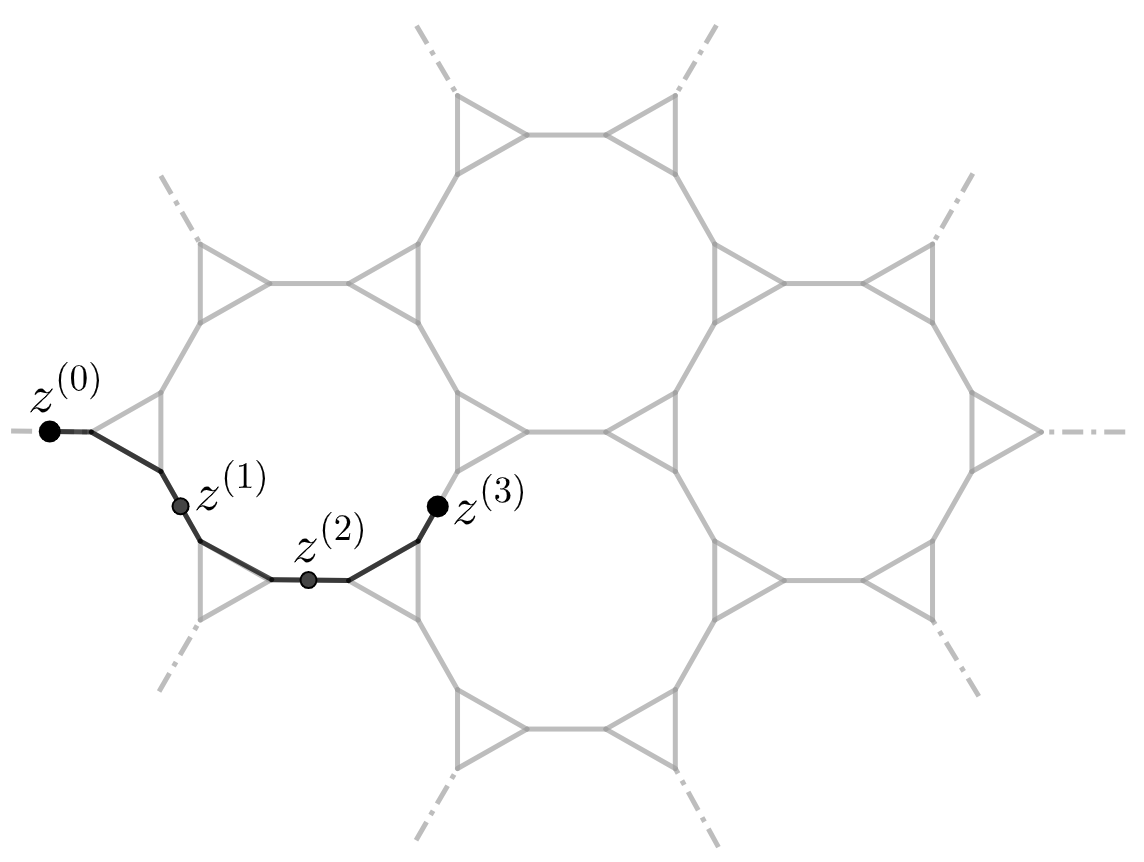}
    \hspace{0.05\textwidth}
    \includegraphics[width=0.4\textwidth]{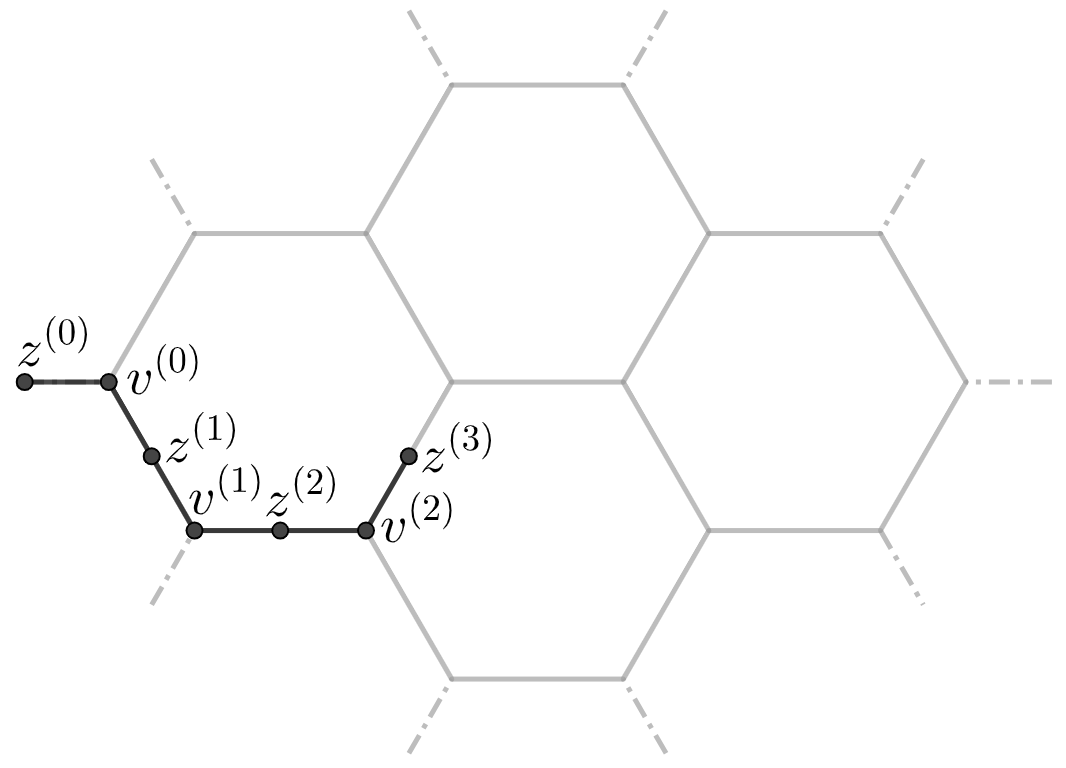}
    \caption{In the left panel, this is a self-avoiding path $\gamma$ on $\Omega_{\312}^\delta$ starting at $z_1$ and ending at $z_2$. In the right panel, this is the corresponding self-avoiding path $\mathscr{F}(\gamma)$ on $\Omega_{6}^\delta$. Note the positions of $z^{(0)},\cdots,z^{(3)}$ and $v^{(0)},v^{(1)},v^{(2)}$.}
    \label{fig::corr_SAW}
\end{figure}

\begin{figure}[!ht]
    \includegraphics[width=0.4\textwidth]{figures/interface_312.png}
    \hspace{0.05\textwidth}
    \includegraphics[width=0.4\textwidth]{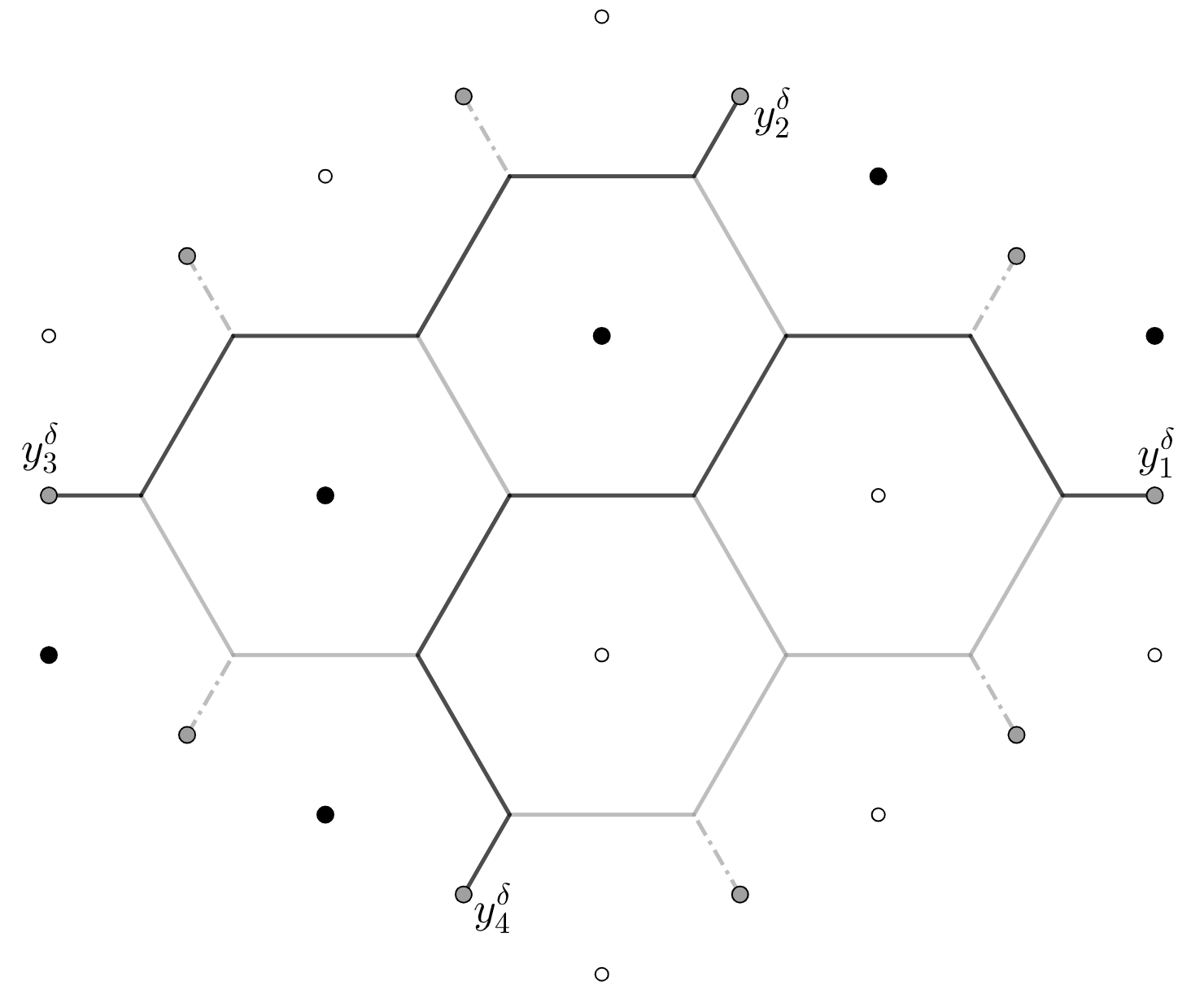}
    \caption{
In the left panel, this is the multiple interfaces for a spin assignment $\sigma_{\312}$ on $\Omega_{\312}^{\delta,*}$ in Figure~\ref{fig::discrete_312}. In the right panel, this is the multiple interfaces for the spin assignment $\sigma_6=\mathscr{G}(\sigma_{\312})$ on $\Omega_6^{\delta,*}$. Vertices with spin $+1$ are colored black and vertices with spin $-1$ are colored white. The multiple interfaces are formed by black edges. In both graphs, $\LA_6^\delta=\LA_{\312}^\delta=\{\{1,4\},\{2,3\}\}$.} 
    \label{fig::interfaces}
\end{figure}

\begin{lemma}\label{lem::corr_Isinginterface}
Assume the same setup as in Lemma~\ref{lem::couplingIsing}. 
Let $(\gamma_{\312}^{\delta,1},\ldots,\gamma_{\312}^{\delta,N})$ be the multiple interfaces of the spin assignment $\sigma_{\312}$ on $\Omega_{\312}^{\delta,*}$ and denote their link pattern by $\LA_{\312}^{\delta}$. 
Denote by $\gamma_6^{\delta,j}=\mathscr{F}(\gamma_{\312}^{\delta,j})$ for $j=1,\ldots,N$ and denote their link pattern by $\LA_6^{\delta}$, 
then $(\gamma_{6}^{\delta,1},\ldots,\gamma_{6}^{\delta,N})$ is the multiple interfaces of the spin assignment $\sigma_6=\mathscr{G}(\sigma_{\312})$ on $\Omega_{6}^{\delta,*}$ and $\LA_6^{\delta}=\LA_{\312}^{\delta}$.    
\end{lemma}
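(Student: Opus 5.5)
The claim is local: each edge of the interfaces for $\sigma_6=\mathscr{G}(\sigma_{\312})$ is determined by the spins on the two dual vertices on either side of it. The plan is to show, edge by edge, that the spins seen along $\mathscr{F}(\gamma_{\312}^{\delta,j})$ in $\Omega_6^{\delta,*}$ are $+1$ on the right and $-1$ on the left. Uniqueness of multiple interfaces with alternating boundary conditions then identifies $\mathscr{F}(\gamma_{\312}^{\delta,j})$ with the $j$-th interface of $\sigma_6$. The endpoints of $\mathscr{F}(\gamma)$ are the same as those of $\gamma$, namely $y_{2j-1}^\delta$ and $y_{2k_j}^\delta$. Hence the link patterns coincide, $\LA_6^\delta=\LA_{\312}^\delta$.

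\textbf{Step 1: local structure.} Consider a segment $\gamma^{(i)}$ of $\gamma=\gamma_{\312}^{\delta,j}$ between consecutive dodecagon-separating mid-edges $z^{(i)}$ and $z^{(i+1)}$.
\begin{itemize}
\item The segment enters a triangle $\mathscr{F}^{-1}(v^{(i)})$ through one edge and leaves through another, visiting two or three of its vertices.
\item The faces of $\Omega_{\312}^{\delta}$ adjacent to this piece are dodecagons together with possibly the triangle itself. In $\Omega_{\312}^{\delta,*}$, dodecagon centres are exactly the vertices of $\Omega_6^{\delta,*}$, and the triangle centre is the degree-three vertex $v_f$.
\item The dodecagons around the triangle are exactly the three hexagons of $\Omega_6^\delta$ around $v^{(i)}$.
\item Let $D_1$ be the common dodecagon adjacent to the edges through $z^{(i)}$ and $z^{(i+1)}$. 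Let $D_2,D_3$ be the other two.
\end{itemize}

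\textbf{Step 2: case check.} There are two ways $\gamma^{(i)}$ can cross the triangle.
\begin{itemize}
\item It uses the single triangle edge between the two entry vertices. Then $D_1$ lies on one side and the triangle on the other. The path runs along edges bordering $D_1$, the triangle, and whichever of $D_2,D_3$ it passes.
\item It goes around through the third vertex. Then the triangle lies on the same side as $D_1$ and $D_2,D_3$ lie on the other.
\end{itemize}
In each case the interface property for $\sigma_{\312}$ assigns signs to every face adjacent to the oriented path. For the corresponding hexagonal path, which turns at $v^{(i)}$ with hexagon $D_1$ on the inside of the turn and the hexagon opposite $D_1$ on the outside, this yields $+1$ on the right and $-1$ on the left. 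This also holds at the shared edges through $z^{(i)}$, since those dodecagons are the same faces in both lattices.

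\textbf{Step 3: conclusion.} Self-avoidance of $\mathscr{F}(\gamma)$ was established just before the lemma, and the images of disjoint paths remain disjoint by the same counting argument ($\#\mathscr{F}^{-1}(v)=3$). By uniqueness of the interface from $y_{2j-1}^\delta$ (exploration with the given local rule), $\mathscr{F}(\gamma_{\312}^{\delta,j})=\gamma_6^{\delta,j}$.

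\textbf{Main obstacle.} The hardest part is the bookkeeping in Step 2: making sure every face adjacent to the hexagonal path's two edges at $v^{(i)}$ is in fact adjacent to $\gamma^{(i)}$ with the correct orientation, in both crossing types. I would handle this by drawing the single triangle with its three dodecagons and checking the two configurations explicitly.
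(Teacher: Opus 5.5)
Your proposal is correct and follows essentially the same route as the paper. You check that each oriented edge of $\mathscr{F}(\gamma)$ has a $+1$ hexagon on its right and a $-1$ hexagon on its left, then use uniqueness of the interfaces and the unchanged endpoints to get equal link patterns. The triangle case analysis in Step 2 is more than you need, though: every edge of $\mathscr{F}(\gamma)$ contains some mid-edge $z^{(i)}$, and its two adjacent hexagons are exactly the two dodecagons flanking the edge of $\gamma$ through $z^{(i)}$, with the same orientation. That one-line observation is all the paper uses, and you already state it at the end of Step 2.
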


\begin{proof}
    We claim that for each oriented edge $e$ in $\gamma_6^{\delta,j}$, the center of the face on the right-hand side of $e$ has spin $+1$ and the center of the face on the left-hand side has spin $-1$. By definition, there exists $i$ such that $e=(\mathscr{F}(\gamma_{\312,i}^{\delta,j}),\mathscr{F}(\gamma_{\312,i+1}^{\delta,j}))$. Then the edge $e_{\312}=(\gamma_{\312,i}^{\delta,j},\gamma_{\312,i+1}^{\delta,j})$ separates two dodecagons and the center of the dodecagon on the right-hand side has spin $+1$ while the center of the dodecagon on the left-hand side has spin $-1$. These centers are exactly the centers of the faces in $\Omega_6^{\delta}$ on the left-hand side and the right-hand side of $e$. Thus the claim is proved.

    Since for any $1\le j\le N$, the path $\gamma_6^{\delta,j}$ starts from $y_{2j-1}$ and ends at $y_{2k_j}$, we deduce that the link pattern for $(\gamma_{6}^{\delta,1},\ldots,\gamma_{6}^{\delta,N})$ is $\LA_6^{\delta}=\LA_{\312}^{\delta}$. From the claim, $(\gamma_{6}^{\delta,1},\ldots,\gamma_{6}^{\delta,N})$ is the multiple interfaces of $\sigma_6$.
\end{proof}

\begin{proof}[Proof of Proposition~\ref{prop::Ising_crossproba}]
Set
\[\beta_6=\frac{1}{4}\log 3, \qquad \beta_{\312}=\frac{1}{2}\log \frac{1+\sqrt{4\sqrt{3}-3}}{2}.\]
Note that $\beta_6$ is the critical inverse temperature for Ising model on the hexagonal lattice~\cite{HOUTAPPEL1950425}.
Note also that $\beta_{\312}$ is related to $\beta_6$ as in Lemma~\ref{lem::couplingIsing}. 
Combining with Lemma~\ref{lem::corr_Isinginterface}, $\LA_{\312}^\delta=\LA_6^\delta$ has the law of the link pattern for critical Ising model on $\Omega_6^{\delta,*}$ with alternating boundary conditions.  
From the results in~\cite{ChelkakSmirnovIsing} and~\cite{PeltolaWuCrossingProbaIsing}, we have~\eqref{eqn::Ising_crossproba} for $\LA_6^\delta$.
It gives the conclusion for $\LA_{\312}^\delta$ as desired. 
\end{proof}

\subsection{Convergence of observable: proof of Theorem~\ref{thm::Ising_observable}}

Fix a $2$-polygon $(\Omega; A, B)$ and assume that $\Omega$ is flat at $B$. 
Suppose $(\Omega_{\312}^\delta;A^\delta,B^\delta)$ is a family of discrete $2$-polygons such that $(\Omega_{\312}^\delta;A^\delta,B^\delta)$ converges to $(\Omega; A, B)$ in the Carath\'{e}odory sense as $\delta\to 0$ and also assume that $\Omega_{\312}^\delta$ is flat near $B^\delta$. For inverse temperature $\beta$, we set $x=\ee^{-2\beta}$. 
Suppose $z^\diamond$ is a midpoint of an edge in $\Omega_{\312}^\delta$ separating two dodecagons. Recall that $\LG_{\312}^\delta(z_1,z_2)$ is defined in Section~\ref{subsec::Intro_Ising}, and the observable $\IO_{\312}^{\delta}(x; z^{\diamond})$ is defined in~\eqref{eqn::Ising_observable}. 
We define analogous notions for the hexagonal lattice. 
For given two mid-edges $z_1,z_2$ in $\Omega_6^\delta$, we define $\LG_6^\delta(z_1,z_2)$ to be the collection of subgraphs of $\Omega_{6}^\delta\cup \{z_1,z_2\}$ that consists of some loops and a self-avoiding path from $z_1$ to $z_2$, such that the loops and the self-avoiding path are disjoint. The fermionic observable for Ising model with inverse temperature $\beta$ on $\Omega_6^\delta$ is defined as 
\begin{align*}
    \IO_6^\delta(x;z^\diamond)=\dfrac{\sum\limits_{w\in \LG_{6}^\delta(A^\delta,z^\diamond)}x^{|w|}\exp\left(-\frac{\ii}{2}W_{\gamma(w)}(A^\delta,z^\diamond)\right)}{\sum\limits_{w\in \LG_{6}^\delta(A^\delta,B^\delta)}x^{|w|}\exp\left(-\frac{\ii}{2}W_{\gamma(w)}(A^\delta,B^\delta)\right)}, \qquad \text{where }x=\ee^{-2\beta}, 
\end{align*}
and $z^\diamond$ is a midpoint of an edge in $\Omega_{6}^\delta$ and $|w|$ denotes the number of vertices of $\Omega_6^\delta$ in $w$.
\begin{lemma}\label{lem::corr_Ising_observable}
    Let $z^{\diamond}$ be a midpoint of an edge in $\Omega_{\312}^\delta$ separating two dodecagons, we have 
\begin{align}\label{eqn::corr_Ising_observable}
    \IO_{\312}^\delta(x;z^\diamond)=\IO_6^\delta\left(\frac{x^3+x^2}{x^3+1};z^\diamond\right).
\end{align}    
\end{lemma}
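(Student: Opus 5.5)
We compare numerator and denominator of \eqref{eqn::Ising_observable} separately. The claim is that for any two mid-edges $z_1,z_2$ separating two dodecagons (boundary points included),
\begin{align*}
\sum_{w\in \LG_{\312}^\delta(z_1,z_2)}x^{|w|}\ee^{-\frac{\ii}{2}W_{\gamma(w)}(z_1,z_2)}
=(1+x^3)^{\#V(\Omega_6^\delta)}\sum_{w'\in \LG_{6}^\delta(z_1,z_2)}y^{|w'|}\ee^{-\frac{\ii}{2}W_{\gamma(w')}(z_1,z_2)},\qquad y=\frac{x^3+x^2}{x^3+1}.
\end{align*}
The prefactor $(1+x^3)^{\#V(\Omega_6^\delta)}$ is the same for $(A^\delta,z^\diamond)$ and $(A^\delta,B^\delta)$, so it cancels in the ratio and gives \eqref{eqn::corr_Ising_observable}.

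To prove the identity, I extend the map $\mathscr{F}$ from paths to configurations. For $w\in\LG_{\312}^\delta(z_1,z_2)$, fix a hexagonal vertex $v$, so $\mathscr{F}^{-1}(v)$ is a triangle with its three outgoing edges, whose midpoints are mid-edges separating dodecagons. Look at which of these three outgoing half-edges $w$ uses. Each vertex of $w$ has degree $2$, except the endpoints of the path, which are mid-edges and not triangle vertices. Hence the number of used outgoing half-edges is even, either $0$ or $2$.
\begin{itemize}
\item If it is $0$, then $w\cap\mathscr{F}^{-1}(v)$ is either empty (weight $1$) or the full triangle loop (weight $x^3$).
\item If it is $2$, then $w$ enters at one triangle vertex and exits at another. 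It does so either along the direct triangle edge (using $2$ vertices, weight $x^2$) or around the other way (using all $3$ vertices, weight $x^3$).
\end{itemize}
In the first case the image $w'=\mathscr{F}(w)$ does not visit $v$. In the second case $w'$ passes through $v$ using those two half-edges. Globally $w'$ is a loop configuration plus a self-avoiding path from $z_1$ to $z_2$ on $\Omega_6^\delta$, with loops and path disjoint. This follows from the same degree argument as in the self-avoidance claim for $\mathscr{F}(\gamma)$ above, applied to loops as well as the path. Conversely, every $w'\in\LG_6^\delta(z_1,z_2)$ arises this way, and its fibre is a product of independent local choices at each $v$.

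Summing over the fibre gives weight $1+x^3$ at each unvisited $v$ and $x^2+x^3$ at each visited $v$. So the fibre sum of $x^{|w|}$ equals $(1+x^3)^{\#V(\Omega_6^\delta)}\,y^{|w'|}$.

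The main obstacle is the winding phase: we need $W_{\gamma(w)}(z_1,z_2)=W_{\gamma(w')}(z_1,z_2)$ for every $w$ in the fibre. The winding of a path between mid-edges depends only on its initial and terminal directions plus an integer number of full turns. The 3-12 path and the hexagonal path have the same sequence of dodecagon-separating mid-edges, traversed in the same directions, since both cross the same edge lines. Between consecutive such mid-edges, the 3-12 path turns by $\pm\pi/3$ whether it takes the short or the long way around the triangle. Taking the long way around does not add a full turn, because the local path is self-avoiding and stays within the triangle region. The hexagonal path turns by the same $\pm\pi/3$ at $v$.

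For the loop components no phase appears, matching the hexagonal side. I expect to check this local turning-angle computation carefully, for instance by fixing coordinates of one triangle and its three outgoing edges. With that, both numerator and denominator transform identically and the lemma follows.
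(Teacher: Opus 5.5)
Your proposal is correct and follows essentially the same route as the paper: extend $\mathscr{F}$ to configurations, use the local two-case analysis at each triangle (empty or full triangle loop when $v$ is unvisited, short or long way when $v$ is visited) to get the fibre weight $(1+x^3)^{\#V(\Omega_6^\delta)-|w'|}(x^2+x^3)^{|w'|}$, and cancel the common factor $(1+x^3)^{\#V(\Omega_6^\delta)}$ between numerator and denominator. Your parity argument for the used outgoing half-edges and your turning-angle check make explicit what the paper only asserts in one line: the long way turns by $\mp\frac{\pi}{6}\pm\frac{2\pi}{3}\mp\frac{\pi}{6}=\pm\frac{\pi}{3}$, the same as the short way and as the hexagonal path at $v$, so the windings agree.
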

\begin{proof}
Suppose $z_1,z_2$ are two midpoints of edges in $\Omega_{\312}^\delta$ separating two dodecagons.
Recall that $\mathscr{F}$ is the mapping on paths defined at the beginning of Section~\ref{subsec::Ising_crossproba}, we extend it as a mapping from $\LG_{\312}^\delta(z_1,z_2)$ to $ \LG_{6}^\delta(z_1,z_2)$ as follows.
Suppose $w\in \LG_{\312}^\delta(z_1,z_2)$ contains loops $w_1,\cdots,w_n$ and a self-avoiding path $w_0=\gamma(w)$ from $z_1$ to $z_2$. 
We define $\mathscr{F}(w)$ to be the union of $\mathscr{F}(w_0),\cdots,\mathscr{F}(w_n)$, see in Figure~\ref{fig::corr_loop_path}. Here if $w_i$ is a loop around a triangle in $\Omega_{\312}^\delta$, we define $\mathscr{F}(w_i)$ to be an empty graph.
\begin{figure}[!ht]
    \includegraphics[width=0.42\textwidth]{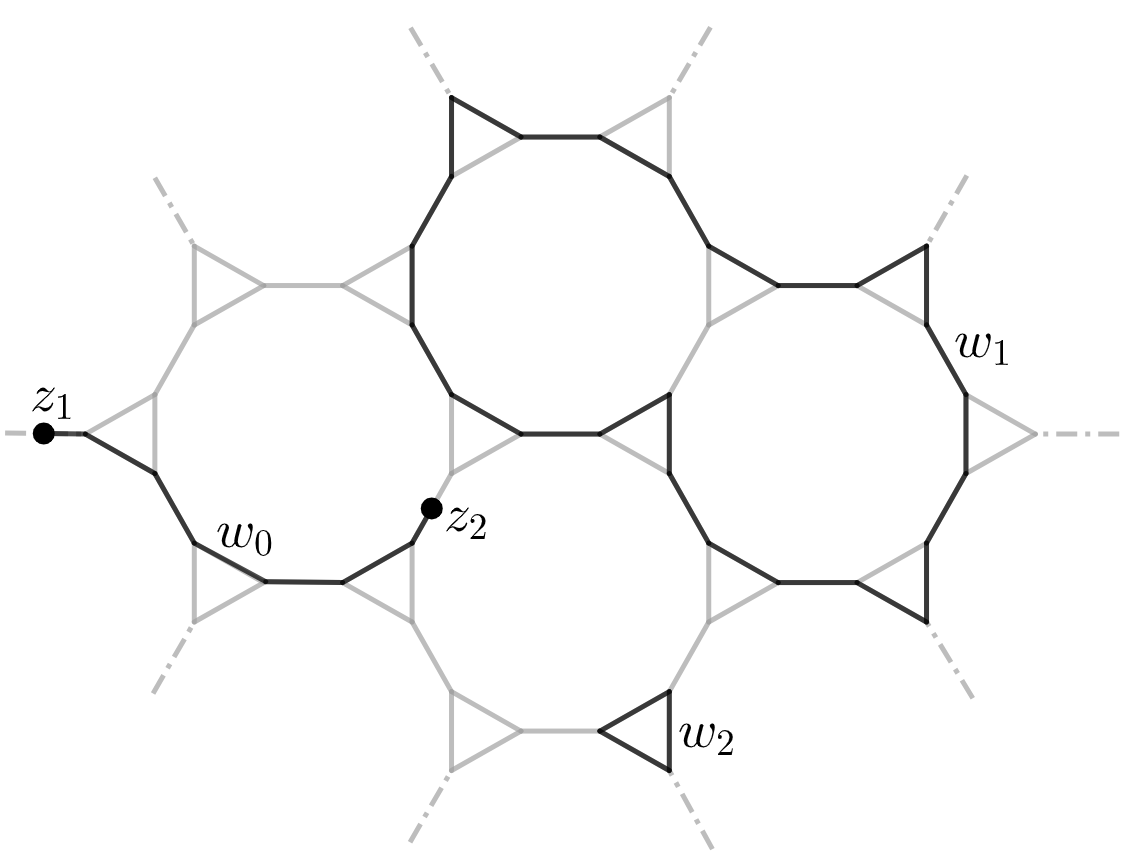}
    \includegraphics[width=0.42\textwidth]{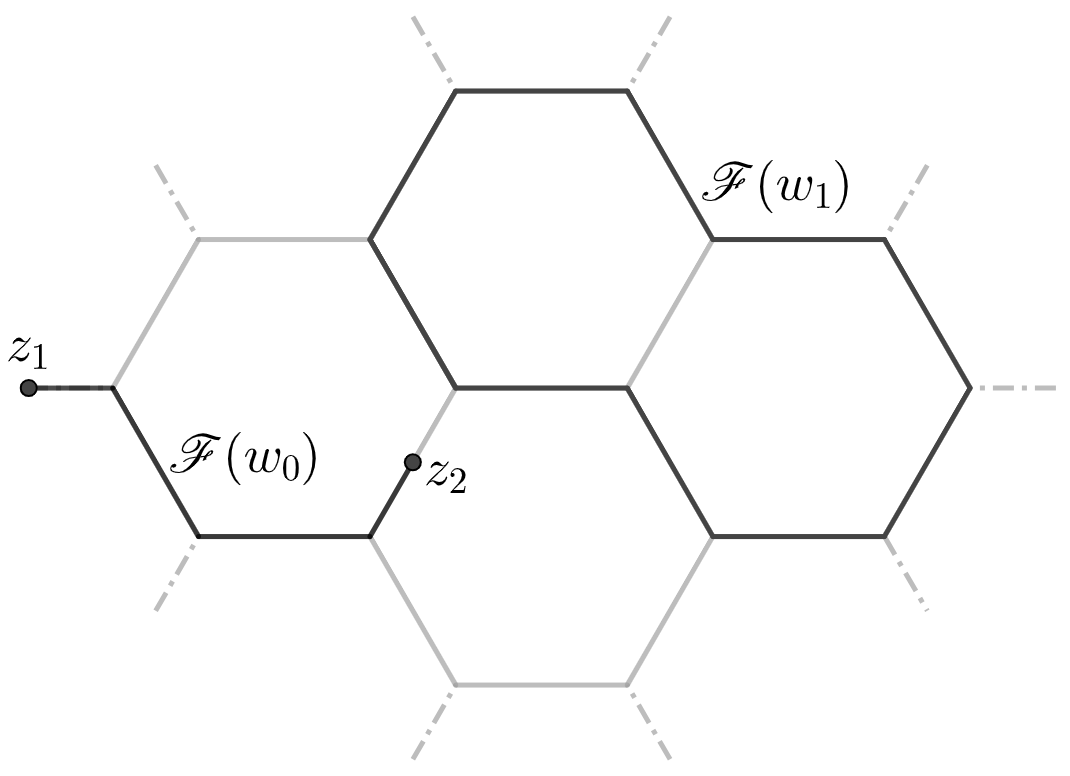}
    \caption{In the left panel, this is a configuration $w\in \LG_{\312}^\delta(z_1,z_2)$ contains two loops $w_1,w_2$ and a self-avoiding path $w_0$ from $z_1$ to $z_2$, where $w_1$ surrounds two dodecagons and $w_2$ surrounds a triangle. In the right panel, this is the corresponding $\mathscr{F}(w)\in \LG_6^\delta(z_1,z_2)$. Note that $\mathscr{F}(w_0)$ is a self-avoiding path from $z_1$ to $z_2$, and $\mathscr{F}(w_1)$ is a loop surrounds two hexagons, and $\mathscr{F}(w_2)$ is an empty graph.}
    \label{fig::corr_loop_path}
\end{figure}

Let us check the image of the mapping $\mathscr{F}$ and its inverse.
\begin{itemize}
\item On the one hand, for $w\in \LG_{\312}^\delta(z_1,z_2)$, we claim that $\mathscr{F}(w)$ belongs to $\LG_{6}^\delta(z_1,z_2)$. Recall that $\mathscr{F}(w_0)$ is self-avoiding. Using a similar argument as in the proof that $\mathscr{F}(w_0)$ is self-avoiding, one may check that $\mathscr{F}(w_i)$ is a simple loop for $i\ge 1$ and $\mathscr{F}(w_0),\cdots,\mathscr{F}(w_n)$ are disjoint. Therefore $\mathscr{F}(w)\in\LG_{6}^\delta(z_1,z_2)$.  
\item On the other hand, for $\tilde{w}\in \LG_{6}^\delta(z_1,z_2)$, let us consider subgraphs $w\in \LG_{\312}^\delta(z_1,z_2)$ such that $\mathscr{F}(w)=\tilde{w}$. For a mid-edge $z$ in $\Omega_{\312}^\delta$ separating two dodecagons, $z$ belongs to $w$ if and only if $z$ belongs $\tilde{w}$. For a vertex $v$ of $\Omega_6^\delta$, the inverse $\mathscr{F}^{-1}(v)$ contains three vertices of $\Omega_{\312}^\delta$ that form a triangle $\bigtriangleup_v$. Let $w_v=w\cap \bigtriangleup_v$, then $w$ is uniquely determined by $(w_v:v\in \Omega_6^{\delta})$. Denote by $|w_v|$ the number of edges in $w_v$, there are two cases. 
\begin{itemize}
\item  If $v$ does not belong to $w$, none of the edges adjacent to $\bigtriangleup_v$ belongs to $w$. There are two possibilities for $w_v$ as shown in Figure~\ref{fig::corr_Isingconfig}~(a). For the first case, $|w_v|=0$ ; for the second case, $|w_v|=3$.
\item If $v$ belongs to $w$, two of the three edges adjacent to $\bigtriangleup_v$ belong to $w$. There are two possibilities for $w_v$ as shown in Figure~\ref{fig::corr_Isingconfig}~(b). For the first case, $|w_v|=1$; for the second case, $|w_v|=2$. 
\end{itemize}
Notice that
\[|w|=|\tilde{w}|+\sum_{v\in V(\Omega_6^\delta)}|w_v|, \qquad W_{\gamma(\tilde{w})}(z_1,z_2)=W_{\gamma(w)}(z_1,z_2).\]
\end{itemize}

From the above analysis, for $\tilde{w}\in \LG_{6}^\delta(z_1,z_2)$, we have
\begin{align*}
\sum_{\substack{w\in \LG_{\312}^\delta(z_1,z_2),\\ \mathscr{F}(w)=\tilde{w}}}\exp\left(-\frac{\ii}{2}W_{\gamma(w)}(z_1,z_2)\right)x^{|w|}
&=\exp\left(-\frac{\ii}{2}W_{\gamma(\tilde{w})}(z_1,z_2)\right)\sum_{\substack{w\in \LG_{\312}^\delta(z_1,z_2),\\ \mathscr{F}(w)=\tilde{w}}}x^{|w|}\\
&=\exp\left(-\frac{\ii}{2}W_{\gamma(\tilde{w})}(z_1,z_2)\right)x^{|\tilde{w}|}\prod_{v\in V(\Omega_6^\delta)}\left(\sum_{w_v}x^{|w_v|}\right)
\\ &=\exp\left(-\frac{\ii}{2}W_{\gamma(\tilde{w})}(z_1,z_2)\right)(x^3+x^2)^{|\tilde{w}|}(x^3+1)^{m-|\tilde{w}|},
\end{align*}
where $m$ is the number of vertices in $\Omega_6^\delta$. Summing over all configurations $\tilde{w}$, we find 
\[\sum_{w\in \LG_{\312}^\delta(z_1,z_2)}\exp\left(-\frac{\ii}{2}W_{\gamma(w)}(z_1,z_2)\right)x^{|w|}=(x^3+1)^m \sum_{\tilde{w}\in \LG_6^\delta(z_1,z_2)}\exp\left(-\frac{\ii}{2}W_{\gamma(\tilde{w})}(z_1,z_2)\right)\left(\frac{x^3+x^2}{x^3+1}\right)^{|\tilde{w}|}.\]
Plugging into~\eqref{eqn::Ising_observable}, we obtain~\eqref{eqn::corr_Ising_observable} as desired.
\end{proof}

\begin{figure}[ht!]
\begin{subfigure}[t]{0.48\textwidth}
\begin{center}
\includegraphics[height=0.2\textwidth]{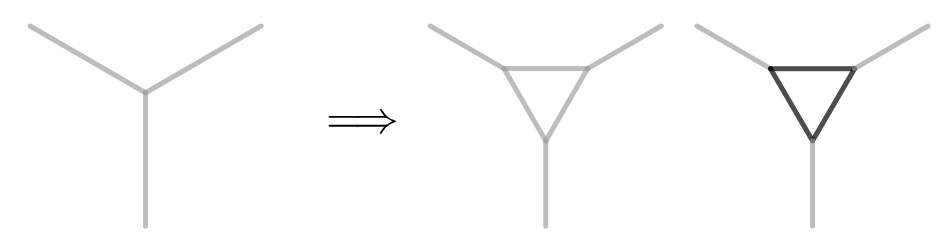}
\end{center}
\caption{Two choices for $w_v$ when $v$ does not belong to $w$.}
\end{subfigure}
\begin{subfigure}[t]{0.48\textwidth}
\begin{center}
\includegraphics[height=0.2\textwidth]{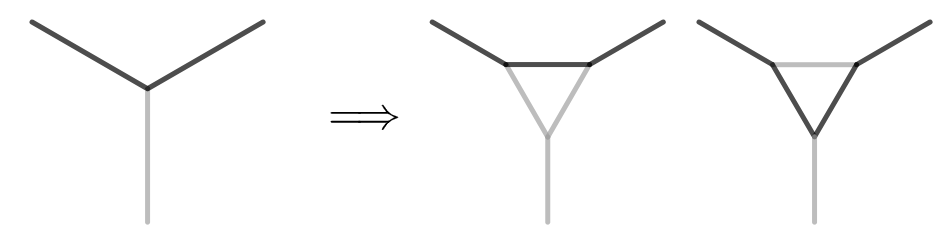}
\end{center}
\caption{Two choices for $w_v$ when $v$ belongs to $w$.}
\end{subfigure}
\caption{Possible configurations for $w_v$.}
\label{fig::corr_Isingconfig}
\end{figure}

\begin{proof}[Proof of Theorem~\ref{thm::Ising_observable}]
Note that $\beta_6=\frac{1}{4}\log 3$ is the critical inverse temperature for Ising model on the hexagonal lattice~\cite{HOUTAPPEL1950425}.
Set 
\[x_6=\frac{\sqrt{3}}{3}, \qquad x_{\312}=\frac{2}{1+\sqrt{4\sqrt{3}-3}}. \]
Then $x_6=\ee^{-2\beta_6}$ and
\[x_6=\frac{x_{\312}^3+x_{\312}^2}{x_{\312}^3+1}.\]
From Lemma~\ref{lem::corr_Ising_observable}, we have $\IO_{\312}^\delta(x_{\312};z^\diamond)=\IO_6^\delta\left(x_6;z^\diamond\right)$. 
The convergence of $\IO_{\312}^\delta(x_{\312};z^\diamond)$ follows from the convergence of $\IO_6^\delta\left(x_6;z^\diamond\right)$ proved in~\cite{ChelkakSmirnovIsing}.
\end{proof}

%% file: tex/perco.tex
Consider the site percolation with $p=1/2$ on $\Omega_{\312}^{\delta, *}$. 
For a vertex $z\in V(\Omega_{\312}^{\delta})$, recall that the event $\LE_{\312}^{\delta}(A; z)$ is defined in Section~\ref{subsec::intro_perco} and $\PO_{\312}^{\delta}(A; z)$ is the probability of the event $\LE_{\312}^{\delta}(A; z)$. If a path $\gamma$ in $\Omega_{\312}^{\delta,*}$ satisfies the conditions in the definition of $\LE_{\312}^\delta(A;z)$, we say the event $\LE_{\312}^\delta(A;z)$ is held by $\gamma$. We define the events $\LE_{\312}^\delta(B;z),\LE_{\312}^\delta(C;z)$ and the probabilities $\PO_{\312}^\delta(B;z),\PO_{\312}^\delta(C;z)$ in a similar way. 
Recall that the observable $\PO_{\312}^{\delta}$ is defined in~\eqref{eqn::perco_observable}. 
We consider the restriction of the percolation to $\Omega_6^{\delta, *}$. For $z\in V(\Omega_6^\delta)$, we define $\LE_{6}^{\delta}(A; z)$ to be the event that there exists a path $\gamma$ on $\Omega_{6}^{\delta,*}$ connecting boundary arcs $(A^\delta B^\delta)$ and $(C^\delta A^\delta)$, such that all vertices on $\gamma$ have spin $+1$ and $\gamma$ separates $z$ from the boundary arc $(B^\delta C^\delta)$. We denote by $\PO_6^{\delta}(A; z)$
the probability of $\LE_{6}^{\delta}(A; z)$. We define the events $\LE_{6}^\delta(B;z),\LE_{6}^\delta(C;z)$ and the probabilities $\PO_{6}^\delta(B;z),\PO_{6}^\delta(C;z)$ in a similar way.
We also define the observable for the hexagonal lattice in a similar way:
\begin{align}\label{eqn::perco_observable_6}
\PO_{6}^{\delta}(z):=\PO_{6}^{\delta}(A; z)+\tau \PO_{6}^{\delta}(B; z)+\tau^2 \PO_{6}^{\delta}(C; z), \qquad \text{where }\tau=\ee^{2\pi\ii/3}. 
\end{align}
\begin{lemma}\label{lem::perco_observable_close}
There exist constants $\eps>0$ and $C\in (0, \infty)$ such that 
\begin{align}\label{eqn::perco_observable_close}
    |\PO_{\312}^\delta(z)-\PO_{6}^\delta(\mathscr{F}(z))|\le C\delta^{\eps},\quad \forall \delta>0,z\in V(\Omega_{\312}^{\delta,*}).
\end{align}
\end{lemma}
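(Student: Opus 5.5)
We compare the two observables term by term and show that each of the three differences, such as $|\PO_{\312}^\delta(A;z)-\PO_6^\delta(A;\mathscr{F}(z))|$, is at most $C\delta^{\eps}$. Since $\Omega_6^{\delta,*}$ is a subgraph of $\Omega_{\312}^{\delta,*}$, the natural coupling is to use the same i.i.d. spins. The percolation on $\Omega_{\312}^{\delta,*}$ restricted to the vertices of the equilateral triangles, i.e.\ to $V(\Omega_6^{\delta,*})$, is then exactly critical site percolation on $\Omega_6^{\delta,*}$. In this coupling every $+1$ path on $\Omega_6^{\delta,*}$ is also a path on $\Omega_{\312}^{\delta,*}$. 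We also check that $z$ and $\mathscr{F}(z)$ lie in the same region cut out by such a path, since the triangle $\mathscr{F}^{-1}(\mathscr{F}(z))$ sits inside a single face of $\Omega_6^{\delta,*}$. Consequently $\LE_6^\delta(A;\mathscr{F}(z))\subset \LE_{\312}^\delta(A;z)$, and the difference of probabilities equals $\PP[\LE_{\312}^\delta(A;z)\setminus \LE_6^\delta(A;\mathscr{F}(z))]$.

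Next we analyse this set difference. The extra edges of $\Omega_{\312}^{\delta,*}$ join a degree-3 center $v_f$ to the three corners of an equilateral triangle $f$. A $+1$ path in $\Omega_{\312}^{\delta,*}$ can therefore cross $f$ through $v_f$, entering at one corner and leaving at another. Those two corners are adjacent in $\Omega_6^{\delta,*}$ anyway, so every such shortcut can be replaced by the direct triangle edge. In other words, the $+1$ clusters of the corner vertices are the same on both lattices: connectivity among the triangular-lattice vertices is unchanged. The only possible discrepancy comes from the separation condition, because the extra vertices $v_f$ give the path a slightly different geometric trace, which deviates by $O(\delta)$. The difference event therefore forces the following: there is a $+1$ crossing from $(A^\delta B^\delta)$ to $(C^\delta A^\delta)$ separating $z$ from $(B^\delta C^\delta)$ in one sense but not in the other. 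Such a crossing must pass within distance $O(\delta)$ of $z$, or of $\mathscr{F}(z)$.

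Finally we bound the probability of this near-miss event. It implies a primal crossing, together with a dual crossing needed to fail separation, from the $O(\delta)$-neighbourhood of $z$ to the boundary arcs. This gives at least three disjoint arms (two $+1$ and one $-1$), or near the boundary a half-plane arm event, from scale $\delta$ out to $\dist(z,\partial\Omega)$. We split into two cases. If $z$ is within distance $\delta^{1/2}$ of $\partial\Omega_{\312}^{\delta}$, we use RSW and crossing estimates on the triangular lattice, where critical site percolation is available via~\cite{SmirnovPercolationConformalInvariance}. This shows that all three probabilities for both lattices are within $C\delta^{\eps}$ of their boundary values (for example, near the arc $(B^\delta C^\delta)$ both $\PO(A;\cdot)$ values are small), so their difference is small. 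Otherwise, the one-arm estimate from scale $\delta$ to scale $\delta^{1/2}$, obtained from RSW, gives probability at most $C\delta^{\eps}$. The main obstacle is this step. We must make the separation-discrepancy event precise, justify that it forces arms from an $O(\delta)$ neighbourhood, and obtain uniform RSW bounds for domains that are only Carathéodory-converging; for the boundary-close case we would first try a polynomial one-arm estimate in half-annuli.
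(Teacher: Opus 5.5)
You have the paper's first steps right: the coupling by restriction, the inclusion $\LE_6^\delta(A;\mathscr{F}(z))\subset\LE_{\312}^\delta(A;z)$, and the remark that detours through the degree-three centres never change connectivity among triangular-lattice vertices. Your interior case (one arm from scale $\delta$ to $\delta^{1/2}$) is also fine once the discrepancy event is known.

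\textbf{The gap.} The gap is the step you flag yourself. You never pin down the discrepancy event, and that identification is the heart of the paper's proof. Write $\tilde z=\mathscr{F}(z)$, the centre of a face $PQR$ of $\Omega_6^{\delta,*}$, and say $z$ lies in the sub-triangle $QR\tilde z$. Suppose $\gamma$ realises $\LE_{\312}^\delta(A;z)$ but its projection $\tilde\gamma$ (keep only the vertices of $\Omega_6^{\delta,*}$) does not separate $\tilde z$ from $(B^\delta C^\delta)$. Since $\gamma$ and $\tilde\gamma$ differ only by detours through face centres, and $z,\tilde z$ lie in the same face, the only possibility is that $\gamma$ makes the detour $Q\to\tilde z\to R$. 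Then $\tilde\gamma$ uses the edge $QR$ and separates the reflection $z'$ of $\tilde z$ across $QR$ from $(B^\delta C^\delta)$. Hence
\[
0\le \PO_{\312}^\delta(A;z)-\PO_6^\delta(A;\tilde z)\le \PP\left[\LE_6^\delta(A;z')\setminus\LE_6^\delta(A;\tilde z)\right].
\]
The right-hand side involves only the hexagonal observable at two adjacent faces. The paper bounds it by $C|z'-\tilde z|^{\eps}$ using the RSW-based H\"older estimate from Smirnov's work. No new arm analysis and no case distinction are needed.

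\textbf{The boundary case.} As written, your boundary case does not work. Near $(B^\delta C^\delta)$ it is fine, since both values of $\PO(A;\cdot)$ are close to $0$. Along $(A^\delta B^\delta)$ and $(C^\delta A^\delta)$, however, $\PO(A;\cdot)$ has non-constant boundary values. Knowing that each lattice's observable is within $C\delta^{\eps}$ of its own boundary value says nothing about the difference between the lattices. You would still have to compare boundary values across the two lattices, which is the original problem.

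\textbf{A self-contained repair.} If you want to avoid citing the H\"older estimate, use the arm structure the identification above provides. On the discrepancy event, $Q$ and $R$ are $+1$ and are joined by $+1$ paths (the two halves of $\tilde\gamma$) to $(A^\delta B^\delta)$ and $(C^\delta A^\delta)$. Since $\LE_6^\delta(A;\tilde z)$ fails while the face of $\tilde z$ touches $\tilde\gamma$ along $QR$, duality forces a $-1$ path from $P$ to $(B^\delta C^\delta)$. In a fixed polygon, no point is close to all three arcs (modulo the usual care with the discrete approximations), so at least one of these arms is macroscopic. A single one-arm bound then gives $C\delta^{\eps}$ uniformly in $z$. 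That bound comes from RSW circuits in full-plane annuli, which work in any domain, so you need neither the $\delta^{1/2}$ split nor half-annulus estimates.
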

\begin{proof}
    It suffices to prove that there exist constants $\eps>0$ and $C\in (0,\infty)$ such that
    \begin{align}\label{eqn::perco_observable_close_auxA}
    |\PO_{\312}^\delta(A;z)-\PO_{6}^\delta(A;\mathscr{F}(z))|\le C\delta^{\eps},\quad \forall \delta>0,z\in V(\Omega_{\312}^{\delta,*}).
    \end{align}
Let $\Sigma_{\312}$ be the site percolation with $p=1/2$ on $\Omega_{\312}^{\delta,*}$. Then the restriction $\Sigma_6=\mathscr{G}(\Sigma_{\312})=(\Sigma_{\312}^v:v\in V(\Omega_6^{\delta,*}))$ is the site percolation with $p=1/2$ on $\Omega_6^{\delta,*}$.
Fix $z\in V(\Omega_{\312}^{\delta, *})$ and let $\tilde{z}=\mathscr{F}(z)$ be the corresponding vertex of $z$ on $\Omega_6^{\delta}$. Let us compare the two events $\LE_{\312}^{\delta}(A;z)$ and $\LE_{6}^{\delta}(A;\tilde{z})$.
\begin{itemize}
\item On the one hand, suppose $\Sigma_6$ satisfies $\LE_{6}^{\delta}(A;\tilde{z})$. Suppose $\LE_{6}^\delta(A;\tilde{z})$ is held by the curve $\gamma$, then $\gamma$ is also a path on $\Omega_{\312}^{\delta,*}$ with spins $+1$ along the path. Furthermore, since $\tilde{z}$ and $z$ lie in the same face of $\Omega_{6}^{\delta,*}$, the path $\gamma$ separates $z$ from $(B^\delta C^\delta)$. Therefore, $\Sigma_{\312}$ satisfies $\LE_{\312}^{\delta}(A;z)$.
\item On the other hand, suppose $\Sigma_{\312}$ satisfies $\LE_{\312}^\delta(A;z)$ while $\Sigma_6$ does not satisfy $\LE_{6}^\delta(A;\tilde{z})$. Let $P,Q,R$ be the three vertices of the equilateral triangle in $\Omega_6^{\delta,*}$ with $\tilde{z}$ as the center. 
As $z$ is a vertex of $\Omega_{\312}^{\delta}$, it lies in one of the three faces around $\tilde{z}$,
and without loss of generality, suppose $z$ lies in the triangle $QR\tilde{z}$. Denote by $z'$ the reflection of $\tilde{z}$ with respect to $QR$, we claim that $\LE_{6}^{\delta}(A;z')$ holds. Suppose $\LE_{\312}^\delta(A;z)$ is held by $\gamma$, and let $\tilde{\gamma}$ be the path on $\Omega_6^{\delta,\ast}$ that visits all vertices of $\Omega_6^{\delta,\ast}$ on $\gamma$ sequentially. By definition, $\tilde{\gamma}$ has spins $+1$ along the path connecting $(A^\delta B^\delta)$ and $(C^\delta A^\delta)$.  By our assumptions, $A^\delta$ and $z$ lie on the same side of $\gamma$ while $A^\delta$ and $\tilde{z}$ lie on opposite sides of $\tilde{\gamma}$. Since $\gamma$ is generated from $\tilde{\gamma}$ by adding some centers of the faces in $\Omega_6^{\delta,*}$, the only possible situation is that $\tilde{\gamma}$ visits the edges $[Q\tilde{z}]$ and $[\tilde{z}R]$, see in Figure~\ref{fig::perco_interface}. In this case, $[QR]$ is on $\tilde{\gamma}$, and $\tilde{z},z'$ lie on opposite sides of $\tilde{\gamma}$. As a consequence, $\LE_{6}^{\delta}(A;z')$ is held by $\tilde{\gamma}$.
\end{itemize}
The above analysis gives
    \begin{align*}
        0\le \PO_{\312}^\delta(A;z)-\PO_{6}^\delta(A;\tilde{z})\le \mathbb{P}[\LE_{6}^\delta(A;z')\backslash \LE_{6}^\delta(A;\tilde{z})].
    \end{align*}
    From~\cite{SmirnovPercolationConformalInvariance}, there exist constants $C>0$ and $\eps>0$ such that
    \[\mathbb{P}[\LE_{6}^\delta(A;z')\backslash \LE_{6}^\delta(A;\tilde{z})]\le C|z'-\tilde{z}|^\eps,\]
    as a consequence of the Russo-Seymour-Welsh theory. Since $|z'-\tilde{z}|$ is propotional to $\delta$, these complete the proof for~\eqref{eqn::perco_observable_close_auxA}, and hence complete the proof for~\eqref{eqn::perco_observable_close}. 
\end{proof}
  \begin{figure}[!h]
        \includegraphics[width=0.8\textwidth]{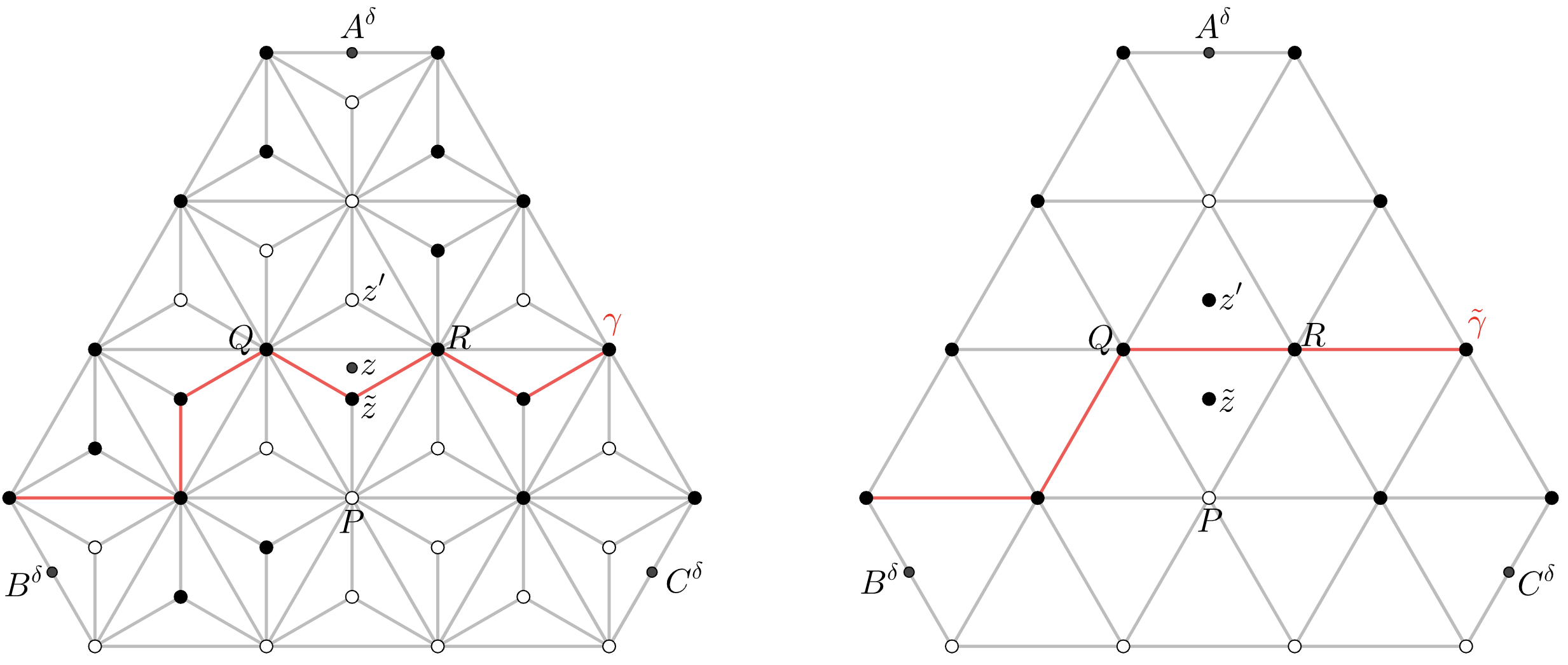}
        \caption{Crossing paths of site percolation. The left panel is the site percolation on $\Omega_{\312}^{\delta,*}$ with a crossing path $\gamma$, and the right panel is the corresponding site percolation on $\Omega_6^{\delta,*}$ with the corresponding crossing path $\tilde{\gamma}$.}
        \label{fig::perco_interface}
    \end{figure}
\begin{proof}[Proof of Theorem~\ref{thm::Perco_observable}]
    It was shown in~\cite{SmirnovPercolationConformalInvariance, beffara2007cardy} that $\PO_6^\delta$ converges to $\PO$ uniformly as $\delta\to 0$. Combining with Lemma~\ref{lem::perco_observable_close}, $\PO_{\312}^\delta $ also converges to $\PO$ uniformly. This completes the proof.
\end{proof}

\begin{proof}[Proof of Proposition~\ref{prop::Perco_crossproba}]

Let $\Sigma_{\312}$ be the site percolation with $p=1/2$ on $\Omega_{\312}^{\delta,*}$ with alternating boundary conditions. Then the restriction $\Sigma_6=\mathscr{G}(\Sigma_{\312})=(\Sigma_{\312}^v:v\in V(\Omega_6^{\delta,*}))$ is the site percolation with $p=1/2$ on $\Omega_6^{\delta,*}$ with alternating boundary conditions. Combining with Lemma~\ref{lem::corr_Isinginterface}, $\LA_{\312}^\delta=\LA_6^\delta$ has the law of the link pattern for critical site percolation on $\Omega_6^{\delta,*}$ with alternating boundary conditions. From the conclusion for percolation on $\Omega_6^{\delta, *}$ (although it is not explicitly written out, the conclusion could be proved using a similar (and simpler) proof as for FK-Ising model~\cite[Proposition~4.2]{FengPeltolaWuConnectionProbaFKIsing}), we obtain~\eqref{eqn::perco_crossproba} for $\Omega_{\312}^{\delta, *}$. 
\end{proof}

%% file: tex/otherlattice.tex
\paragraph*{Appollonian gasket.}
We denote by $G_1=\L_6$ the hexagonal lattice and by $G_2=\L_{\312}$ the Fisher transformation of $G_1$. 
Generally, we denote by $G_{n+1}$ the Fisher transformation of $G_n$ for $n\ge 1$. The limiting graph of $G_n$ is the Apollonian gasket (when the positions of new vertices are taken properly), see Figure~\ref{fig::Appollonian_Martini}~(a). 


Our proof in Sections~\ref{sec::Ising}-\ref{sec::perco} also gives the conformal invariance of the Ising model and of the percolation on $G_n$ for $n\ge 2$. 
In the case of the percolation, the critical parameter on $G_n$ is always $1/2$. 
In the case of the Ising model, the critical parameter $\alpha_n$ for $G_n$ can be defined recursively: 
\begin{equation}\label{eqn::Ising_critical_recursion}
\alpha_1=\frac{1}{4}\log 3, \qquad \alpha_n=\frac{1}{2}\log\left(\ee^{4\alpha_{n+1}}-\ee^{2\alpha_{n+1}}+1\right), \qquad n\ge 1. 
\end{equation}
The parameter $x_n$ for Chelkak-Smirnov's observable on $G_n$ can be defined recursively as well: 
\begin{equation}\label{eqn::Ising_x_recursion}
x_1=\frac{\sqrt{3}}{3}, \qquad x_n=\frac{x^3_{n+1}+x^2_{n+1}}{x^3_{n+1}+1}, \qquad n\ge 1. 
\end{equation}
These parameters have limits as $n\to\infty$: 
\begin{align}\label{eqn::alphan_limit}
\alpha_n=\frac{1}{2n}-\frac{\log n}{2n^2}+O\left(\frac{1}{n^2}\right), \qquad x_n=1-\frac{1}{n}+O\left(\frac{\log n}{n^2}\right). 
\end{align}
\begin{figure}[ht!]
\begin{subfigure}[t]{0.48\textwidth}
\begin{center}
  \includegraphics[height=0.2\textheight]{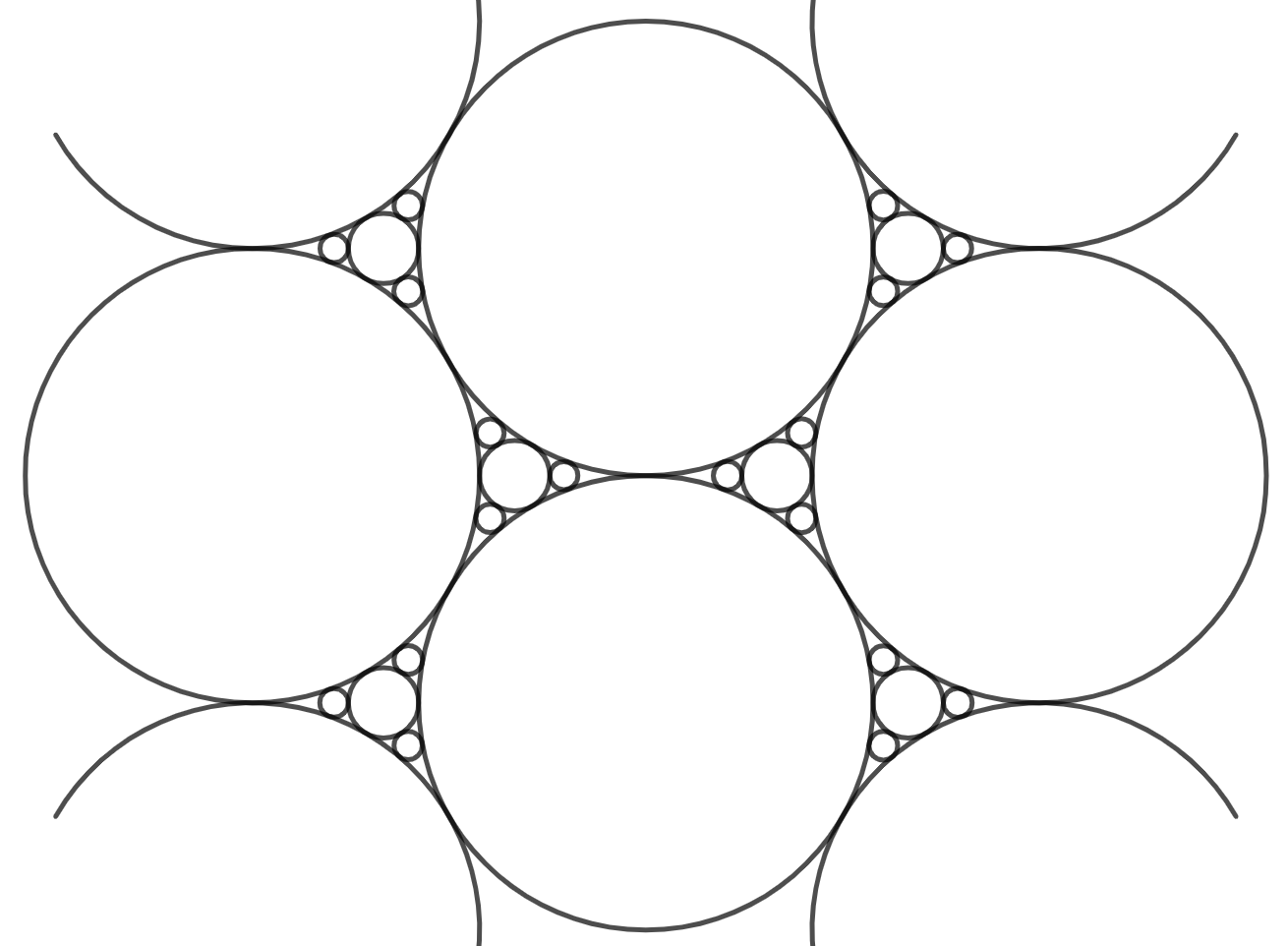}
\end{center}
\caption{The Apollonian gasket.}
\end{subfigure}
\begin{subfigure}[t]{0.48\textwidth}
\begin{center}
\includegraphics[height=0.2\textheight]{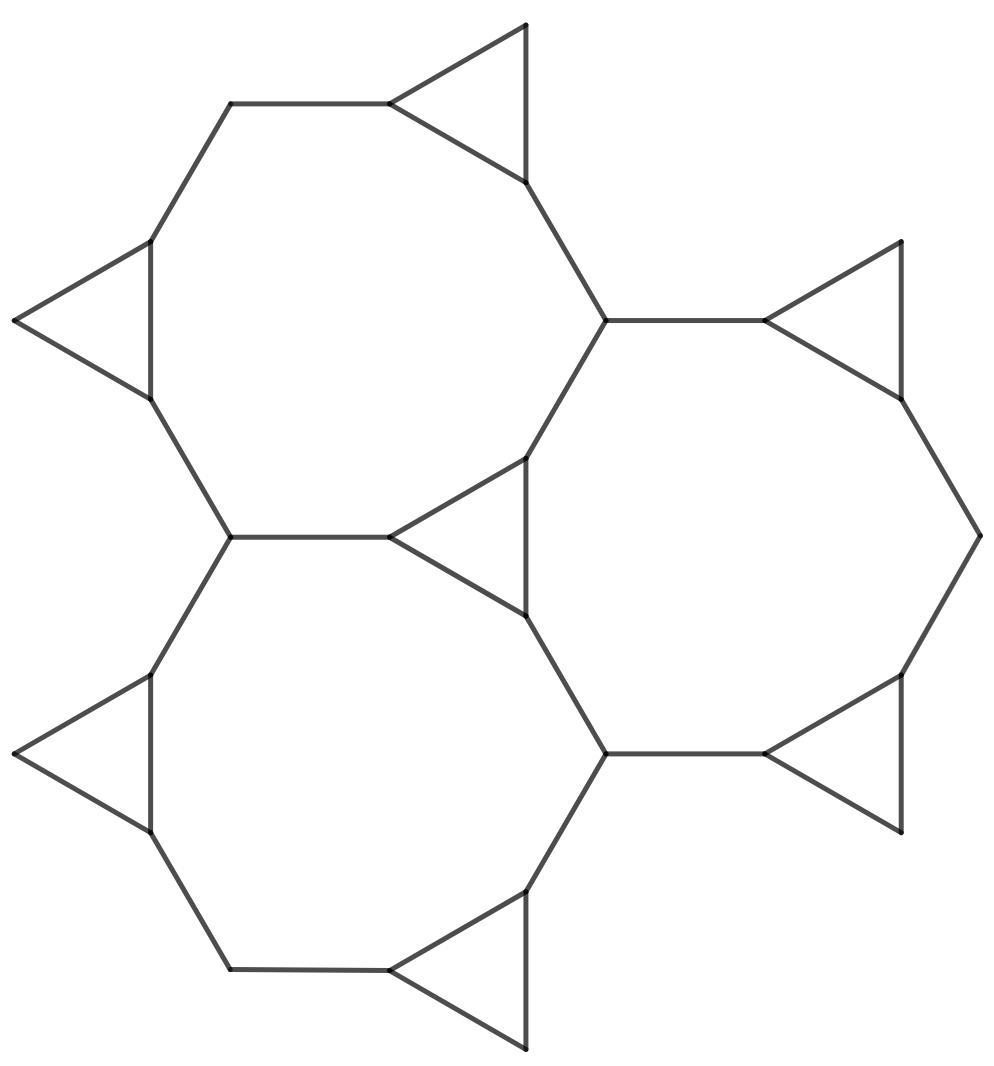}
\end{center}
\caption{The martini lattice.}
\end{subfigure}
\caption{\label{fig::Appollonian_Martini}}
\end{figure}

\paragraph*{Martini lattice.}
The martini lattice is the Fisher transformation of the hexagonal lattice at alternate vertices, see Figure~\ref{fig::Appollonian_Martini}~(b). 
See~\cite{grimmett2012self, DingGuoOnloopMartini312} for analysis on the connective constant on the martini lattice.
Our proof in Sections~\ref{sec::Ising}-\ref{sec::perco} also gives the conformal invariance of the Ising model and of the percolation on the martini lattice. 
In the case of the percolation, the critical parameter on is $1/2$. 
In the case of the Ising model, the critical parameter $\beta$ for the martini lattice is the unique root of 
\begin{equation}\label{eqn::Ising_critical_martini}
\ee^{6\beta}-\ee^{4\beta}+\ee^{2\beta}=\sqrt{3}.
\end{equation}
The parameter $x$ for Chelkak-Smirnov's observable on the martini lattice is the positive root of
\begin{equation}\label{eqn::Ising_x_martini}
\frac{x^4+x^3}{x^3+1}=\frac{\sqrt{3}}{3}.
\end{equation}


%% file: tex/ppf.tex
Following O. Schramm's seminal idea to describe the scaling limit of interfaces in critical planar lattice models using the Loewner evolution~\cite{SchrammFirstSLE}, one is led to analyzing partition functions for the evolution.
J. Dubédat's commutation relation, which combines conformal invariance with the domain Markov property, implies that natural partition functions are solutions to the Belavin–Polyakov–Zamolodchikov (BPZ) equations~\cite{DubedatCommutationSLE}.
Pure partition functions for multiple SLE form a basis of the solution space of certain BPZ equations.
They have been intensively studied in recent years; see~\cite{FengLiuPeltolaWu2024} and references therein. 
We briefly summarize their definition in this appendix. 

Fix $\kappa\in (0,8)$ and set $h=\frac{6-\kappa}{2\kappa}$. 
Let $\HH=\{z\in \C: \Im{z}>0\}$ be the upper-half plane and denote $\chamber_{2N}=\{(y_1, \ldots, y_{2N}): y_1<\cdots<y_{2N}\}$. The pure partition functions of multiple $\SLE_{\kappa}$ are the recursive collection $\{\LZ_{\alpha}^{(\kappa)}: \alpha\in\LP:=\cup_N\LP_N\}$ of positive functions $\LZ_{\alpha}^{(\kappa)}: \chamber_{2N}\to (0,\infty)$ uniquely determined by the BPZ PDE system~\eqref{eqn::BPZPDE}, M\"obius covariance~\eqref{eqn::COV}, the power-law bound~\eqref{eqn::PLB}, as well as the recursive asymptotics~\eqref{eqn::ASY}. 
\begin{itemize}
    \item BPZ PDE: 
    \begin{align}\label{eqn::BPZPDE}
        \left[\frac{\kappa}{2}\partial_j+\sum_{i\neq j}\left(\frac{2}{y_i-y_j}\partial_i-\frac{2h}{(y_i-y_j)^2}\right)\right]\LZ=0,\quad \text{for all }j\in \{1,\ldots,2N\}.
    \end{align}
    \item M\"obius covariance: 
    \begin{align}\label{eqn::COV}
        \LZ(y_1,\ldots,y_{2N})=\prod_{i=1}^{2N}\varphi'(y_i)^h\times \LZ(\varphi(y_1),\ldots,\varphi(y_{2N})),
    \end{align}
    for all M\"obius transforms $\varphi:\HH \to \HH$ such that $\varphi(y_1)<\cdots<\varphi(y_{2N})$. 
     \item Power-law bound: There exist $C>0$ and $p>0$ such that for all $N\ge 1$ and all $(y_1, \ldots, y_{2N})\in\chamber_{2N}$,  
    \begin{align}\label{eqn::PLB}
    |\LZ(y_1,\ldots,y_{2N})|\le C\prod_{1\le i<j\le 2N}(y_j-y_i)^{\mu_{ij}(p)},\qquad \text{where }\mu_{ij}(p)=\begin{cases}p,&\text{if }|y_j-y_i|>1;\\ -p&\text{if }|y_j-y_i|\le 1.\end{cases}
    \end{align}
    \item Asymptotics: Denote by $\emptyset$ the empty link pattern in $\LP_0$, then $\LZ_\emptyset^{(\kappa)}=1$. For all $N\ge 1$, $\alpha\in \LP_N$, $j\in \{1,\ldots,2N-1\}$ and $\xi\in (y_{j-1},y_{j+2})$ (here with the convention, $y_0=-\infty$ and $y_{2N+1}=+\infty$), 
\begin{align}\label{eqn::ASY}
\lim_{y_j,y_{j+1}\to \xi}\frac{\LZ_\alpha^{(\kappa)}(y_1,\ldots,y_{2N})}{(y_{j+1}-y_j)^{-2h}}=\begin{cases}\LZ_{\hat{\alpha}}^{(\kappa)}(y_1,\ldots,y_{j-1},y_{j+2},\ldots,y_{2N}),&\text{if }\{j,j+1\}\in \alpha;\\ 0,&\text{otherwise};\end{cases}
\end{align}    
    where $\hat{\alpha}\in \LP_{N-1}$ is the link pattern obtained from $\alpha$ by removing the link $\{j,j+1\}\in \alpha$ and relabeling the remaining indices by $\{1,\ldots,2N-2\}$.
\end{itemize}

\begin{definition}\label{def::pure_partition}
For a general polygon $(\Omega;y_1,\ldots,y_{2N})$, the pure partition functions for multiple $\SLE_{\kappa}$ are defined as 
\begin{align*}
    \LZ_{\alpha}^{(\kappa)}(\Omega;y_1,\ldots,y_{2N}):=\prod_{i=1}^{2N}|\varphi'(y_i)|^h\times \LZ_{\alpha}^{(\kappa)}(\varphi(y_1),\ldots,\varphi(y_{2N})), \qquad\text{for }\alpha\in\LP_N, 
\end{align*}
where $\varphi:\Omega\to \HH$ is a conformal mapping such that $\varphi(y_1)<\cdots<\varphi(y_{2N})$. By M\"obius covariance~\eqref{eqn::COV}, this definition is independent of the choice of $\varphi$.
\end{definition}